\documentclass[10pt]{article}
\usepackage{latexsym,amsmath,amsthm,verbatim,ifthen,amssymb}
\usepackage[latin1]{inputenc}
\usepackage[all]{xy}
\usepackage{graphicx}
\usepackage{epsfig}
\newdir{ >}{{}*!/-7pt/\dir{>}}

\setlength{\textwidth}{125mm}
\setlength{\textheight}{185mm}

\newtheorem{theorem}{Theorem}
\newtheorem{corollary}[theorem]{Corollary}
\newtheorem{lemma}[theorem]{Lemma}
\newtheorem{proposition}[theorem]{Proposition}

\newtheorem{definition}[theorem]{Definition}

\def\nil{{\rm{nil}\hskip1pt}}
\def\cat{{\rm{cat}\hskip1pt}}
\def\wcat{{\rm{wcat}\hskip1pt}}
\def\secat{{\rm{secat}\hskip1pt}}
\def\wsecat{{\rm{wsecat}\hskip1pt}}
\def\TC{{\rm{TC}\hskip1pt}}
\def\wTC{{\rm{wTC}\hskip1pt}}
\def\cuplength{{\rm{cuplength}\hskip1pt}}

\def\CP{{\mathbb{C}\mathbb{P}}}


\begin{document}

\title{Weak sectional category\footnotetext{This work has been supported
 by the Ministerio de Educaci\'on y Ciencia
grant MTM2006-06317, the Portuguese Foundation for Science and Technology (FCT/POCI 2010) and FEDER.}}
\author{J.M. Garc\'{\i}a Calcines\footnote{Universidad de La Laguna,
Facultad de Matem\'aticas, Departamento de Matem\'atica
Fundamental, 38271 La Laguna, Spain. E-mail:
\texttt{jmgarcal@ull.es}} \, and L. Vandembroucq\footnote{Centro
de Matem{\'a}tica, Universidade do Minho, Campus de Gualtar,
4710-057 Braga, Portugal. E-mail: \texttt{lucile@math.uminho.pt}}
}

\date{\empty}

\maketitle

\begin{abstract}
Based on a Whitehead-type characterization of the sectional
category we develop the notion of
weak sectional category. This is a new lower bound of the
sectional category, which is inspired by the notion of weak
category in the sense of Berstein-Hilton. We establish several
properties and inequalities, including the fact that the
weak sectional category is a better lower bound for the sectional category than the classical
one given by the nilpotency of the kernel of the induced map in
cohomology. Finally, we apply our results in the study of the
topological complexity in the sense of Farber.
\end{abstract}

\vspace{0.5cm}
\noindent{2000 \textit{Mathematics Subject Classification} : 55M30.}\\
\noindent{\textit{Keywords} : Sectional category, Topological complexity.} 
\vspace{0.2cm}

\section*{Introduction.}
For a fibration $p:E\rightarrow B,$ the sectional category $\secat
(p)$ is defined as the least integer $k$ such that $B$ admits a
cover constituted by $k+1$ open subsets, on each of which $p$ has a
 section. Requiring homotopy section instead of section permits
 to extend the notion of sectional category to any map. This is
 a variant of Lusternik-Schnirelmann category (or L-S
category, for short) and also a generalization, since $\secat
(p)=\cat (B)$ when $E\simeq *.$  The sectional category has been
introduced for fibrations by A. Schwarz \cite{Sch} in the 1960's,
under the name \textit{genus} (then it was renamed by I.M. James
\cite{J}). It has turned out to be a useful tool not only in
questions concerning the classification of bundles, the embedding
problem (see \cite{Sch} for both applications) or the complexity
of the root-finding problem for algebraic equations \cite{Sm}, but
also, more recently, in the study of the motion planning problem
in robotics \cite{Far}, \cite{Far2}.

In the development of L-S category, Ganea's and Whitehead's
characterizations have played a very important role. These
characterizations are not only easier to handle in the framework
of homotopy theory than the open set definition but they also
permit to obtain many approximations of L-S category. Since one of
the most successful aproximations of L-S category is the weak
category in the sense of Berstein-Hilton \cite{B-H}, our aim in
this paper is to develop an analogous lower bound for sectional
category, the \textit{weak sectional category}. Taking into
account that the classical weak category is based on Whitehead's
characterization we need a Whitehead-type formulation of sectional
category. Such a characterization has been obtained by A. Fassò
\cite{F}. More precisely, for a cofibration $f:A\to X$ and an
integer $n\geq 0$, she defined the $n$-fatwedge of $f$ as the
subspace of $X^{n+1}$ given by $T^n(f)=\{(x_0,...,x_n)\in
X^{n+1}\, |\,x_i\in A,\hspace{3pt} \mbox{for some}\hspace{3pt}i\}$
and she proves that, for a map $p:E\to B$ with associated
cofibration $\hat p:E\to \hat B$, $\secat(p)\leq n$ if and only if
the diagonal map $\Delta_{n+1}:\hat B\to \hat B^{n+1}$ lifts, up
to homotopy, in the $n$-fatwegde of the cofibration $\hat p$:
$$\xymatrix{
{} & {T^n(\hat p)} \ar@{>->}[d] \\
{\hat B} \ar@{.>}[ur] \ar[r]_{\Delta _{n+1}} & {\hat B^{n+1}} }$$
In section 1 we use a notion of fatwedge for any map which
generalizes Fassò's fatwedge and recover her result. Namely, for
any map $p:E\to B$, we construct what we call the n-sectional
fatwedge of $p$, $\kappa _n:T^n(p)\rightarrow B^{n+1},$ and prove
that $\secat(p)\leq n$ if and only if $\Delta_{n+1}:B\to B^{n+1}$
lifts, up to homotopy, in $T^n(p)$.

\medskip
The notion of weak sectional category comes naturally. Considering
$C_{\kappa_n}$ the homotopy cofibre of the $n$-sectional fat wedge
of $p$, $\kappa _n:T^n(p)\rightarrow B^{n+1}$, and considering
$l_n:B^{n+1}\rightarrow C_{\kappa_n}$ the induced map, the weak
sectional category of $p,$ $\wsecat(p),$ is the least integer $n$
(or $\infty$) such that the composition $$\xymatrix{
B\ar[r]^{\Delta_{n+1}}& B^{n+1}\ar[r]^{l_n}& C_{\kappa_n}}$$ is
homotopically trivial. Section 2 is fully devoted to the study of
this new lower bound of sectional category. This study can be
summarized in the following theorem (see section 2 for more
details)

\medskip
\textbf{Theorem.} Let $p: E\to B$ be a map and $C_p$ be its
homotopy cofibre. Then
\begin{enumerate}
\item[(a)] $\wsecat(p)\leq \wcat(B)$
\item[(b)] $\wsecat(p)\leq \wcat(C_p)$
\item[(c)] $\wsecat(p)\geq \wcat(C_p)-1$
\item[(d)] $\wsecat(p)\geq \nil \ker p^*$ (for any commutative ring $\pi$)
\item[(e)] If the map $p: E\to B$ admits a homotopy retraction, then
$$\wsecat(p)=\wcat(C_p)\hspace{5pt}\mbox{and}\hspace{5pt}\nil \ker p^*=\cuplength(C_p).$$
\end{enumerate}
Here $\nil \ker p^*$ denotes the nilpotency of the kernel of the morphism $p^*$ which is induced by $p$ in
cohomology. This is a classical lower bound for sectional category \cite{Sch}. By inequality (d) above, weak sectional category turns out to be a better lower bound than $\nil \ker p^*$ and we will see that actually the inequality can be strict.
In Section 2 we also give sufficient conditions to assure that
$\wsecat(p)=\secat (p).$

\medskip
On the other hand, the motion planning problem is an important
research area in robotics in which topological methods can be
applied. The motion planning problem of the configuration space
$X$ (associated to a mechanical system) consists of constructing a
program or a devise, which takes pairs of configurations $(a,b)\in
X\times X$ as an input and produces as an output a continuous path
$\omega $ in $X$ such that $\omega (0)=a$ and $\omega (1)=b.$
Studying such a research area M. Farber introduced in \cite{Far},
\cite{Far2} the notion of topological complexity, which is a
numerical homotopical invariant of the space $X.$ It is defined as
the sectional category of the evaluation fibration $\pi
_X:X^I\rightarrow X\times X,$ $\pi _X(\alpha )=(\alpha (0),\alpha
(1)).$  In the last section of this paper we study what we call
\textit{weak topological complexity}, $\wTC(X),$ which is nothing
else but the weak sectional category of the evaluation fibration
$\pi _X.$ This is a new lower bound of the topological complexity.
Specializing the results given in section 2 we obtain the
corresponding ones, summarized as

\medskip
\textbf{Theorem.} Let $X$ be any space. Then
\begin{enumerate}
\item[(a)] $\wTC(X)\leq \wcat(X\times X)$
\item[(b)] $\wTC(X)\geq \nil \ker \hspace{3pt}\cup$
\item[(c)] $\wTC(X)=\wcat(C_{\Delta _X})$ (and $\nil \ker \hspace{3pt}\cup
=\cuplength(C_{\Delta _X})$).
\end{enumerate}

We also give sufficient conditions to assure that
$\wTC(X)=\TC(X).$ Finally we give two concrete computations of
$\wTC$. The first one consists of the explicit determination of
the homotopy cofibre of the diagonal map $\Delta_{S^n}: S^n\to
S^n\times S^n$ of a sphere. This fact permits us to recover the
result by Farber that the topological complexity of an odd
dimensional sphere is $1$ while that of an even dimensional sphere
is $2$. The second computation has the aim to prove, through an
example, that $\wTC$ is, in general, a better lower bound for the
topological complexity than
$\nil \ker \hspace{3pt}\cup$.\\

Throughout this paper we work with homotopy commutative diagrams,
homotopy pullbacks, homotopy pushouts and joins as well as some of
their more important properties. We assume that the reader is
familiarized with this framework; nevertheless at the very
beginning of Section 1 we recall the notion of join, the Join
Theorem and the Prism Lemma. For more details we refer the reader
to \cite{M}, \cite {D} and \cite{B-K}. We also point out that the
category in which we shall work is the category of well-pointed
compactly generated Hausdorff spaces. Therefore all categorical
constructions are carried out in this category.

\section{Joins and characterizations of sectional category.}

The main goal of this section is to give some important notions
and results that will be needed in the rest of the paper. We begin
by recalling the notion of join of two maps. Given any pair of
maps $A\stackrel{f}{\longrightarrow}C\stackrel{g}{\longleftarrow
}B$, the \textit{join of} $f$ \textit{and} $g,$ $A*_C B,$ is the
homotopy pushout of the homotopy pullback of $f$ and $g$

$$\xymatrix@C=0.7cm@R=0.7cm{ {\bullet } \ar[rr] \ar[dd] & & {B} \ar[dl] \ar[dd]^g \\
 & {A*_C B} \ar@{.>}[dr] & \\ {A} \ar[ur] \ar[rr]_f & & {C} }$$
being the dotted arrow the corresponding co-whisker map, induced
by the weak universal property of homotopy pushouts.

Notice that the map $A*_C B\to C$ is only defined up to weak
equivalence.  Any map constructed in such a way is weakly
equivalent to the canonical co-whisker map obtained by considering
first the standard homotopy pullback of $f$ and $g$ and then the
standard homotopy pushout of the projections on $A$ and $B$. In
order to give another example of a concrete construction of the
join, suppose that $f$ (or $g$) is a fibration. Then the honest
pullback of $f$ and $g$ is actually a homotopy pullback and taking
the standard homotopy pushout of the projections $A\times_C B\to
A$ and $A\times_C B\to B$ we obtain a representative of the map
$A*_C B\to C$ of the form:
$$\begin{array}{rcl}
A\ast_C B=A\amalg A\times_C B\times [0,1] \amalg B/\sim &\to&C\\
&&\\
\langle a,b,t\rangle & \mapsto & f(a)=g(b)\\
\end{array}$$
where $\sim$ is given by $(a,b,t)\sim a$ if $t=0$ and $(a,b,t)\sim b$ if $t=1$.\\

In \cite{D}, Doeraene
established the following result, called the ``Join Theorem".

\begin{theorem}[\textbf{Join Theorem}]
Consider the homotopy commutative diagram in which the squares are
homotopy pullbacks
$$\xymatrix{
{A} \ar[r]^f \ar[d] & {C} \ar[d]^{\gamma } & {B} \ar[l]_g \ar[d]
\\ {A'} \ar[r]_{f'} & {C'} & {B'} \ar[l]^{g'}  }$$
Then there is a homotopy pullback
$$\xymatrix{
{A*_C B} \ar[r] \ar[d] & {C} \ar[d]^{\gamma } \\
{A' *_{C'}B'} \ar[r] & {C'} }$$ \noindent where the horizontal
arrows are the corresponding induced co-whisker maps.
\end{theorem}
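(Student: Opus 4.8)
The plan is to build the claimed homotopy pullback in two stages, passing first through the homotopy pullbacks that define the two joins and then through the homotopy pushouts. Write $P$ for the homotopy pullback of $f$ and $g$ and $P'$ for the homotopy pullback of $f'$ and $g'$, so that by construction $A*_CB$ is the homotopy pushout of the span $A\leftarrow P\rightarrow B$ and $A'*_{C'}B'$ is the homotopy pushout of $A'\leftarrow P'\rightarrow B'$. The first stage is to prove that $P\to P'$ is a homotopy pullback over $\gamma$, that is $P\simeq C\times_{C'}P'$, where $C\times_{C'}(-)$ denotes homotopy pullback along $\gamma$. Here the Prism Lemma does the work: its pasting law for homotopy pullbacks lets one reassociate the iterated pullback, and the two hypotheses identify $C\times_{C'}A'\simeq A$ and $C\times_{C'}B'\simeq B$, giving
$$C\times_{C'}P'\;=\;C\times_{C'}(A'\times_{C'}B')\;\simeq\;(C\times_{C'}A')\times_C(C\times_{C'}B')\;\simeq\;A\times_C B\;=\;P,$$
compatibly with the projections to $A$ and $B$.

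The crux is the second stage, where the base change along $\gamma$ must be commuted past the defining homotopy pushout. Set $X:=C\times_{C'}(A'*_{C'}B')$, the homotopy pullback that the theorem asserts to be $A*_CB$; since the square in the statement commutes up to homotopy, the co-whisker maps furnish a comparison map $A*_CB\to X$, and it suffices to show this map is a weak equivalence. For this I would invoke Mather's cube theorem \cite{M}, applied to the cube whose bottom face is the homotopy pushout square determined by the span $A'\leftarrow P'\rightarrow B'$ and whose vertical edges are the base changes along $\gamma$. Each of the four side faces is then a homotopy pullback, because the vertical direction is base change and homotopy pullbacks paste; as the bottom face is a homotopy pushout, the cube theorem forces the top face to be a homotopy pushout as well. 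That top face exhibits $X$ as the homotopy pushout of the span $C\times_{C'}A'\leftarrow C\times_{C'}P'\rightarrow C\times_{C'}B'$, which by the hypotheses and the first stage is precisely $A\leftarrow P\rightarrow B$; hence $X\simeq A*_CB$, and the comparison map is the resulting equivalence.

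The main obstacle I anticipate is the coherence bookkeeping in the second stage: one must check that the identifications of the base-changed span with $A\leftarrow P\rightarrow B$ are compatible, so that the top face really is the pushout defining $A*_CB$, and that the resulting equivalence $X\simeq A*_CB$ is compatible with the co-whisker maps down to $C$ and $C'$ --- so that the square displayed in the statement, and not merely some abstractly equivalent square, is exhibited as a homotopy pullback. Once the cube theorem and the Prism Lemma are in place, everything else reduces to routine use of the universal properties of homotopy pullbacks and pushouts.
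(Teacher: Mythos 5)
The paper offers no proof of this theorem: it is quoted from Doeraene \cite{D}, where it is deduced from the cube axiom of a J-category. Your argument is correct and is essentially that proof specialized to topological spaces: your stage one (Prism-Lemma pasting, identifying $P\simeq C\times_{C'}P'$ and identifying the base-changed span with $A\leftarrow P\rightarrow B$) and your stage two (Mather's cube theorem \cite{M}, which is exactly the cube axiom in the category of spaces this paper works in) are the two steps of the argument in the cited source. You are right that the cube theorem is the crux --- the statement is not a formal consequence of pasting alone --- and the coherence bookkeeping you defer is routine at the level of rigor of the paper.
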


We also mention the following classical result that we will use in
the sequel, as well as its dual version for homotopy pushouts.

\begin{proposition}[\textbf{Prism Lemma}] Given any homotopy commutative diagram,
$$\xymatrix{
  A \ar[dd] \ar[rr] \ar[dr] & &    C \ar[dd] \\
  & B  \ar[dd] \ar[ur] & \\
  D \ar[dr] \ar'[r][rr] & & F \\
   &  E  \ar[ur] &  }$$
\noindent where the square $BCFE$ is a homotopy pullback. Then,
$ACFD$ is a homotopy pullback if and only $ABED$ is a homotopy
pullback.
\end{proposition}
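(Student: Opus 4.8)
The plan is to deduce the lemma from the two-out-of-three property of weak equivalences, applied to the homotopy fibres of the three vertical maps $A\to D$, $B\to E$ and $C\to F$. The tool I would use is the standard fibre characterization of homotopy pullbacks (as developed in \cite{M} and \cite{D}): a homotopy commutative square, with left vertical map $u$ and right vertical map $v$, is a homotopy pullback if and only if, for every choice of basepoint, the induced comparison map from the homotopy fibre of $u$ to the homotopy fibre of $v$ over the corresponding point is a weak homotopy equivalence. Granting this criterion, the prism lemma becomes essentially formal.

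Concretely, I would first fix a point of $F$ together with coherent lifts along the bottom triangle $D\to E\to F$, so that the homotopy fibres of $A\to D$, $B\to E$ and $C\to F$ are all taken over compatibly chosen basepoints. The top triangle $A\to B\to C$, together with the homotopies witnessing the commutativity of the prism, then induces comparison maps
$$\mathrm{hofib}(A\to D)\longrightarrow \mathrm{hofib}(B\to E)\longrightarrow \mathrm{hofib}(C\to F),$$
whose composite is homotopic to the comparison map $\mathrm{hofib}(A\to D)\to \mathrm{hofib}(C\to F)$ attached to the outer square $ACFD$; this is precisely because $A\to B\to C$ composes to $A\to C$ and $D\to E\to F$ composes to $D\to F$. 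By the fibre criterion, the square $ABED$ is a homotopy pullback if and only if the first comparison map is a weak equivalence, the square $BCFE$ if and only if the second one is, and the square $ACFD$ if and only if the composite is.

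Since $BCFE$ is assumed to be a homotopy pullback, the middle comparison map $\mathrm{hofib}(B\to E)\to \mathrm{hofib}(C\to F)$ is a weak equivalence. The two-out-of-three property then shows that the first comparison map is a weak equivalence if and only if the composite is, that is, $ABED$ is a homotopy pullback if and only if $ACFD$ is. Running this argument over all components of $F$ finishes the proof.

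The step I expect to be the main obstacle is the bookkeeping that makes the three fibre comparisons genuinely compatible: one must select basepoints and connecting homotopies coherently, so that the triangle of homotopy fibres commutes up to homotopy and the outer comparison really does factor, up to homotopy, through the middle fibre. An alternative and more computational route would sidestep the fibre criterion by replacing the vertical maps functorially by fibrations, thereby realizing the homotopy pullbacks as strict pullbacks, and then invoking the elementary pasting and cancellation lemma for strict pullback squares; there the delicate point becomes instead arranging the fibrant replacements compatibly across the whole prism, so that ``homotopy pullback'' translates into ``strict pullback'' in all three squares simultaneously.
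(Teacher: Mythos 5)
The paper gives no proof of this statement: the Prism Lemma is recorded as a classical result to be used in the sequel, with the introduction deferring to \cite{M}, \cite{D} and \cite{B-K} for this background. So there is no in-paper argument to compare yours against, and your proposal must be judged on its own terms. In outline it is correct, and it is one of the standard proofs: the fibrewise characterization of homotopy pullbacks you invoke is a genuine fact from exactly that literature, the factorization of the outer fibre comparison through $\mathrm{hofib}(B\to E)$ is the functoriality of homotopy fibres applied to the prism, and two-out-of-three for weak equivalences then does the rest, since the hypothesis on $BCFE$ makes the middle comparison a weak equivalence.

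Two points need tightening. First, your quantification over basepoints is backwards: the fibre criterion for the squares $ABED$ and $ACFD$ requires the comparison maps to be weak equivalences over every point (equivalently, every path component) of $D$, not of $F$. Fixing a point of $F$ and choosing ``coherent lifts'' is problematic --- a point of $F$ need not lift to $D$ at all, and distinct components of $D$ lying over a single component of $F$ must each be checked --- so running the argument ``over all components of $F$'' does not cover what is needed. The fix is immediate: for each $d\in D$ take its images $e\in E$ and $f\in F$; the hypothesis gives that the middle comparison is a weak equivalence at \emph{every} point of $E$, in particular at $e$, and two-out-of-three applies pointwise over $D$. Second, the bookkeeping you flag as the main obstacle is indeed the crux, and it conceals a hypothesis implicit in the statement: whether a homotopy commutative square is a homotopy pullback depends on the chosen commuting homotopy, so the lemma is only correct when the homotopy for the outer square $ACFD$ is the one obtained by pasting the homotopies of the two triangles and of the squares $ABED$ and $BCFE$ (this is how the paper uses it). With that convention your claimed factorization of comparisons holds and the argument is complete. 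Your alternative route --- replacing the vertical maps by fibrations and invoking the strict pasting and cancellation lemma for pullbacks --- is, for what it is worth, closer to how the references the paper cites actually argue.
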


The basic notion of interest in this work is the sectional category:

\begin{definition}
The sectional category of a map $p:E\rightarrow B,$ \secat$(p)$, is the
least integer $n$ (or $\infty$) such that $B$ can be covered by $n+1$ open
subsets, on each of which $p$ admits a homotopy section.
\end{definition}

There is a classical Ganea-type characterization of this homotopy
numerical invariant which is as follows.  Given any map
$p:E\rightarrow B$ we may consider the join map $E*_B E\rightarrow
B$ resulting from the join of $p$ with $p.$ This construction can
be done again, considering the join of this map with $p.$ In this
way we inductively obtain  $j^n_p:*^n_B E\rightarrow B$ as the
join of $j^{n-1}_p$ and $p$ (we set $j^0_p=p,$ $*^0_B E=E$). The
characterization of $\secat$ is then given by the following
classical result, see for instance \cite{J}.

\begin{theorem}
Let $p:E\rightarrow B$ be a map. If $B$ is a paracompact space,
then \secat$(p)\leq n$ if and only if $j^n_p$ admits a homotopy
section.
\end{theorem}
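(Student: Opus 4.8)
The plan is to reduce to the case of a fibration and then exploit the explicit fibrewise model of the iterated join together with a partition of unity. First I would replace $p$ by a fibration: since both the sectional category and the iterated join $j^n_p$ are homotopy invariant, and since over an open set a homotopy section of a fibration can be rigidified to a strict section via the homotopy lifting property, I may assume without loss of generality that $p\colon E\to B$ is a fibration and interpret all sections as strict sections. Under this hypothesis the honest pullback of $p$ with itself is a homotopy pullback, so the construction of the join recalled above produces, inductively, an explicit model of $j^n_p\colon *^n_B E\to B$ as a fibrewise join: the fibre over a point $b$ is the iterated join $(p^{-1}(b))^{*(n+1)}$, whose points are barycentric combinations $\langle (e_0,t_0),\dots,(e_n,t_n)\rangle$ with $e_i\in p^{-1}(b)$, $t_i\geq 0$, $\sum_i t_i=1$, subject to the relation collapsing the $i$-th entry whenever $t_i=0$. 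By the Join Theorem (iterated fibrewise), $j^n_p$ is again a fibration.

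For the implication $\secat(p)\leq n\Rightarrow$ ``$j^n_p$ admits a section'', I would argue as follows. Write $B=U_0\cup\dots\cup U_n$ with a strict section $s_i\colon U_i\to E$ of $p$ on each $U_i$. Here the paracompactness of $B$ enters: it guarantees a partition of unity $\{\rho_i\}_{i=0}^n$ subordinate to the cover $\{U_i\}$. I would then define $\sigma\colon B\to *^n_B E$ by $\sigma(b)=\langle (s_i(b),\rho_i(b))\rangle_{i=0}^n$. This is well defined and continuous precisely because the collapsing relation in the join discards every entry with $\rho_i(b)=0$, so the a priori undefined value $s_i(b)$ for $b\notin U_i$ is irrelevant. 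Since each $s_i$ is a section, every surviving entry $e_i=s_i(b)$ lies in $p^{-1}(b)$, whence $j^n_p\circ\sigma=\mathrm{id}_B$ and $\sigma$ is the required section.

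For the converse, suppose $\sigma\colon B\to *^n_B E$ is a (homotopy, hence strict) section of $j^n_p$. The barycentric coordinates $t_i\colon *^n_B E\to[0,1]$ are continuous, so the sets $U_i=\{b\in B:\, t_i(\sigma(b))>0\}$ are open; and since $\sum_i t_i=1$ they cover $B$. On $\{t_i>0\}$ the $i$-th coordinate projection $\pi_i$ to $E$ is defined and continuous and satisfies $p\circ\pi_i=j^n_p$; hence $s_i=\pi_i\circ\sigma|_{U_i}\colon U_i\to E$ is a section of $p$ over $U_i$, proving $\secat(p)\leq n$.

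The main obstacle is concentrated in the first step, namely producing a genuinely fibrewise model of the iterated join in which the barycentric coordinates $t_i$ and the partial projections $\pi_i$ are globally and continuously defined; once this model is in hand, both directions reduce to the bookkeeping above. The only other delicate point is the rigidification used to pass freely between homotopy sections and strict sections, which relies on the fibration hypothesis obtained after replacing $p$ and on the openness of the sets $U_i$.
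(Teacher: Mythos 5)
The paper itself does not prove this statement: it records it as a classical result and refers the reader to \cite{J} (the result goes back to Schwarz \cite{Sch}, and the paper also mentions the refinement in \cite{F-G-K-V} where paracompactness is weakened to normality). Your argument is precisely that classical Schwarz--James proof --- replace $p$ by a fibration, realize $*^n_B E$ as the fibrewise join with barycentric coordinates, build a section from a subordinate partition of unity, and conversely extract an open cover with local sections from the coordinate functions $t_i$ --- so it cannot be compared with an internal proof, but it is the right route, and its overall structure is sound. What the paper's citation-based approach ``buys'' is exactly the avoidance of the point-set work that your sketch concentrates in its first step; what your approach buys is a self-contained argument making visible where paracompactness is used.

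Two points need repair. First, ``by the Join Theorem (iterated fibrewise), $j^n_p$ is again a fibration'' is a misattribution: Doeraene's Join Theorem, as stated in the paper, is a statement about homotopy pullbacks and does not say that a fibrewise join of fibrations is a fibration. That fact is a genuine theorem of Schwarz \cite{Sch}; alternatively you can avoid it entirely, because $\secat$ only asks for \emph{homotopy} sections: given a homotopy section $\sigma$ of $j^n_p$, the sets $U_i=\{b : t_i(\sigma(b))>0\}$ are still open and cover $B$, and $p\circ \pi_i\circ\sigma|_{U_i}=(j^n_p\circ\sigma)|_{U_i}\simeq \mathrm{incl}_{U_i}$, which is exactly a homotopy section of $p$ over $U_i$; so no rigidification of $\sigma$ is needed. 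Second, your justification of the continuity of $\sigma(b)=\langle (s_i(b),\rho_i(b))\rangle$ addresses well-definedness, not continuity. Continuity at a point $b_0$ with $\rho_i(b_0)=0$ can genuinely fail if one only knows $\{\rho_i>0\}\subseteq U_i$: nearby points $b$ may have $\rho_i(b)>0$ with $s_i(b)$ escaping in the fibre, and in the quotient topology of the join this destroys continuity (neighbourhoods of a collapsed point do not contain uniform ``boxes''). What saves the construction is the strong form of subordination that paracompactness provides, namely $\mathrm{supp}\,\rho_i\subseteq U_i$: then near $b_0$ every index with $b_0\notin\mathrm{supp}\,\rho_i$ has $\rho_i\equiv 0$ on a whole neighbourhood, every remaining $s_i$ is defined and continuous near $b_0$, and $\sigma$ locally lifts to the pre-quotient space by padding the tuple with dummy entries, whence continuity. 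Since this is exactly where the paracompactness hypothesis enters, it should be stated correctly; with that, and with the identification of the homotopy-theoretic join with the fibrewise model (which you rightly flag, and which is standard in the compactly generated category the paper works in), your proof is complete.
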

We remark that there is an improvement of the above theorem in
\cite{F-G-K-V}, in which $B$ is required to be just a normal
space.

As said in the introduction, A. Fassò gave in \cite{F} the
following Whitehead-type characterization of sectional category:

\begin{theorem}
Let $p:E\rightarrow B$ be a map and let $\hat p:E\to \hat B$ be
the associated  cofibration of $p$. Then $\secat(p)\leq n$ if and
only if the diagonal map $\Delta_{n+1}:\hat B\to \hat B^{n+1}$
lifts, up to homotopy, in the $n$-fatwegde $T^n(\hat
p)=\{(b_0,...,b_n)\in \hat B^{n+1}\, |\,b_i\in E,\hspace{3pt}
\mbox{for some}\hspace{3pt}i\}$ of the cofibration $\hat p$:
$$\xymatrix{
{} & {T^n(\hat p)} \ar@{>->}[d] \\
{\hat B} \ar@{.>}[ur] \ar[r]_{\Delta _{n+1}} & {\hat B^{n+1}} }$$

\end{theorem}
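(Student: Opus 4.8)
The plan is to reduce the statement to the Ganea-type characterization already recorded above (the one asserting that $\secat(p)\le n$ if and only if $j^n_p:*^n_B E\to B$ admits a homotopy section), by identifying the homotopy pullback of the fatwedge inclusion along the diagonal with the join map $j^n_p$. Throughout I work with $\hat B\simeq B$ and write $\iota_n:T^n(\hat p)\hookrightarrow\hat B^{n+1}$ for the fatwedge inclusion. First I would replace $\iota_n$ by a fibration and invoke the standard fact that, through a fibration, a homotopy lift of $\Delta_{n+1}$ along $\iota_n$ exists if and only if the homotopy pullback $\Delta_{n+1}^*(\iota_n)\to\hat B$ admits a homotopy section. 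Thus the whole theorem follows once I prove the key claim: the map $\Delta_{n+1}^*(\iota_n)\to\hat B$ is weakly equivalent, over $\hat B\simeq B$, to $j^n_p:*^n_B E\to B$.

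I would prove this claim by induction on $n$. The base case $n=0$ is immediate, since $T^0(\hat p)=E$, $\iota_0=\hat p$, $\Delta_1=\mathrm{id}$, and the homotopy pullback of $\hat p$ along the identity is $p=j^0_p$. For the inductive step I would use the decomposition of the fatwedge as a homotopy pushout: sorting a tuple $(b_0,\dots,b_n)$ according to whether one of its first $n$ entries lies in $E$ or its last entry does, one obtains
$$T^n(\hat p)=\bigl(T^{n-1}(\hat p)\times\hat B\bigr)\cup_{T^{n-1}(\hat p)\times E}\bigl(\hat B^n\times E\bigr),$$
which is a homotopy pushout because $\hat p$, and hence each leg, is a cofibration.

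Next I would pull this homotopy pushout square back along $\Delta_{n+1}$. Using the Prism Lemma and the induction hypothesis, the three corners are identified over $B$ with $*^{n-1}_B E$ (the extra factor $\hat B$ contributing contractible homotopy fibres), with $E$, and with the homotopy pullback $(*^{n-1}_B E)\times^h_B E$ of $j^{n-1}_p$ and $p$. Since homotopy pullback preserves homotopy pushouts of spaces, the pulled-back square is again a homotopy pushout; and because its span is now exactly the homotopy pullback of $j^{n-1}_p$ and $p$, this homotopy pushout is by definition the join $(*^{n-1}_B E)*_B E=*^n_B E$, with co-whisker map $j^n_p:*^n_B E\to B$. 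This establishes the claim, and together with the Ganea-type theorem — whence I inherit the hypothesis that $B$ be paracompact (or normal, by the improvement in \cite{F-G-K-V}) — it yields $\secat(p)\le n$ if and only if $\Delta_{n+1}$ lifts up to homotopy into $T^n(\hat p)$.

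The step I expect to be the main obstacle is this last identification: after pulling the pushout decomposition of $T^n(\hat p)$ back along the diagonal, one must check both that the result is still a homotopy pushout and that its span has become the homotopy-pullback span of $j^{n-1}_p$ and $p$, so that the pushout is genuinely the join. The Join Theorem is the natural instrument for organizing these compatibilities, but care is needed because $T^n(\hat p)$ itself is only a pushout — not a join — before restricting to the diagonal. The remaining delicate point is purely bookkeeping: tracking the base points and the co-whisker structure maps to $B$ so that all identifications remain compatible with the maps to $B$ at each stage of the induction.
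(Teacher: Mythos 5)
Your proposal is correct, and its skeleton is the same as the paper's: reduce the homotopy lifting problem to the existence of a homotopy section of the homotopy pullback of the fatwedge inclusion along $\Delta_{n+1}$, identify this pullback with the Ganea-type join $j^n_p:*^n_B E\to B$ by induction on $n$, and conclude via the join characterization of $\secat$ (inheriting, exactly as the paper does, the paracompactness/normality hypothesis on $B$). The genuine difference is the engine of the inductive step. The paper defines, for an arbitrary map, an abstract $n$-sectional fatwedge $\kappa_n:T^n(p)\to B^{n+1}$ as an iterated join, proves the pullback identification by exhibiting two homotopy pullback squares (Prism Lemma plus a lemma on product projections) and feeding them to Doeraene's Join Theorem, and only afterwards matches this abstract object with the subspace fatwedge of a cofibration, via an explicit point-set lemma showing that $T^{n-1}(p)\times E$ is the homotopy pullback of $\kappa_{n-1}\times id_B$ and $id_{B^n}\times p$. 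You instead take the subspace fatwedge of $\hat p$ as the primary object, decompose it as a homotopy pushout over $T^{n-1}(\hat p)\times E$, and pull that square back along the diagonal, invoking the principle that homotopy pullbacks preserve homotopy pushouts. That principle is valid in the paper's category of compactly generated spaces, but it is not formal: it is Mather's second cube theorem, a special feature of spaces (false in a general model category) and precisely the fact underlying Doeraene's Join Theorem, so it should be cited by name rather than treated as routine. The trade-off between the two routes: yours dispenses with the abstract join-fatwedge and with the paper's point-set comparison lemma, but in exchange you must verify by hand that the pulled-back span is the homotopy-pullback span of $j^{n-1}_p$ and $p$ together with its two projections --- concretely, that the homotopy pullback of $\kappa_{n-1}\times\hat p$ along $(\Delta_n,id_{\hat B})$ is the fibre product over $\hat B$ of the pullbacks of the two factors --- so that the resulting pushout is genuinely the join and its structure map to $B$ is a legitimate co-whisker representative of $j^n_p$. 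This is the bookkeeping you flag at the end, and it is exactly the content that the paper's comparison lemma and the hypotheses of the Join Theorem package for free; with it supplied, your argument closes.
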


We give an alternative proof of this result based on a notion of
fatwedge defined for any map and called \textit{sectional
fatwedge}. We note that Doeraene \cite{D2} used this construction
for different purposes as an auxiliary notion under the name of
relative fatwedge.

\begin{definition}
{\rm The $n$-\textit{sectional fatwedge} of any map
$p:E\rightarrow B$
$$\kappa _n:T^n(p)\rightarrow B^{n+1}$$ \noindent is defined inductively as follows:
\begin{enumerate}
\item[(i)]
For $n=0$ we set $\kappa _0=p:E\rightarrow B$

\item[(ii)] Having defined $\kappa _{n-1}:T^{n-1}(p)\rightarrow B^n$
then $\kappa_n$ is obtained by considering the join of $\kappa
_{n-1}\times id_B$ and $id_{B^n}\times p$:
$$\xymatrix@C=0.5cm@R=0.6cm{ {\bullet } \ar[rr] \ar[dd] & & {B^n\times E}
\ar[dl] \ar[dd]^{id_{B^n}\times p} \\
 & {T^n(p)} \ar@{.>}[dr]^{\kappa _n} & \\
 {T^{n-1}(p)\times B} \ar[ur] \ar[rr]_{\kappa _{n-1}\times id_B} & & {B^{n+1}} }$$
\end{enumerate} }
\end{definition}

We will use the following lemma. Its proof is straightforward
and therefore is omitted.

\begin{lemma}
{\rm Consider $f:X\rightarrow Y$ any map and $Z$ any space. Then,
the following square, where the
horizontal arrows are the obvious projections, is a pullback and a homotopy pullback
$$\xymatrix{
{X\times Z} \ar[r]^(.6){pr} \ar[d]_{f\times id_Z} & {X} \ar[d]^f \\
{Y\times Z} \ar[r]_(.6){pr} & {Y.} }$$ }
\end{lemma}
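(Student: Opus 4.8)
The plan is to prove the two assertions in turn, first the strict pullback property and then the homotopy pullback property, the latter following almost immediately from the former.

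For the strict pullback, I would verify the universal property directly, since the fibre product is visibly a product in this situation. Given a space $W$ together with maps $g\colon W\to X$ and $h\colon W\to Y\times Z$ satisfying $f\circ g=pr\circ h$, I would write $h=(h_1,h_2)$ with $h_1\colon W\to Y$ and $h_2\colon W\to Z$; the commutativity condition then reads $h_1=f\circ g$, so the only remaining freedom lies in $h_2$. The assignment $w\mapsto (g(w),h_2(w))$ therefore defines the unique map $W\to X\times Z$ whose composites with $pr$ and $f\times id_Z$ recover $g$ and $h$, which is exactly what the universal property demands. Equivalently, one identifies the honest fibre product $\{(x,(y,z))\in X\times(Y\times Z)\mid f(x)=y\}$ with $X\times Z$ via $(x,z)\mapsto (x,(f(x),z))$, an identification under which the two legs become precisely the projection $pr$ and the map $f\times id_Z$.

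For the homotopy pullback, the key observation is that the projection $pr\colon Y\times Z\to Y$ is a fibration: in the category of well-pointed compactly generated Hausdorff spaces a product projection satisfies the homotopy lifting property. Since the square is already a strict pullback and one of the two maps into the common target $Y$, namely $pr$, is a fibration, I would invoke the standard criterion recalled at the beginning of this section (the honest pullback of two maps one of which is a fibration is automatically a homotopy pullback) to conclude. I do not anticipate any genuine obstacle; the statement is essentially formal, and the only point requiring minimal care is to remain consistently within the chosen convenient category, so that the categorical product, the fibre product, and the fibration property of the projection all behave as expected.
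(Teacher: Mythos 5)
Your proof is correct and is precisely the straightforward argument the paper has in mind (the paper explicitly omits the proof as ``straightforward''): the square is an honest pullback by direct verification of the universal property, and since the projection $pr\colon Y\times Z\to Y$ is a fibration, the criterion recalled at the start of Section~1 (an honest pullback along a fibration is a homotopy pullback) gives the second assertion. No gaps.
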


\begin{theorem}
Let $p:E\rightarrow B$ be any map and $n\geq 0$ any nonnegative
integer. Then there is a homotopy pullback
$$\xymatrix{
{*^n_B E} \ar[d]_{j^n_p} \ar[rr]^{\widetilde{\Delta }_{n+1}} & &
{T^n(p)} \ar[d]^{\kappa _n} \\ {B} \ar[rr]_{\Delta _{n+1}} & &
{B^{n+1}} }$$
\end{theorem}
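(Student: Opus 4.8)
The plan is to induct on $n$, using the Join Theorem as the engine and reducing its hypotheses to the preceding Lemma together with the Prism Lemma. For $n=0$ there is nothing to do: $*^0_B E=E$, $j^0_p=p$, $T^0(p)=E$, $\kappa_0=p$ and $\Delta_1=\mathrm{id}_B$, so the square is the trivial homotopy pullback of $p$ against itself.

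For the inductive step I would first read off the two defining joins, using throughout the identification $B^{n+1}=B^n\times B$. By construction $j^n_p$ is the co-whisker map of the join over $B$ of $j^{n-1}_p:*^{n-1}_B E\to B$ and $p:E\to B$, whereas $\kappa_n$ is the co-whisker map of the join over $B^{n+1}$ of $\kappa_{n-1}\times\mathrm{id}_B:T^{n-1}(p)\times B\to B^{n+1}$ and $\mathrm{id}_{B^n}\times p:B^n\times E\to B^{n+1}$. The idea is then to fit these two joins into the shape of the Join Theorem with common middle map $\gamma=\Delta_{n+1}:B\to B^{n+1}$: for this I must produce a homotopy pullback square relating $j^{n-1}_p$ to $\kappa_{n-1}\times\mathrm{id}_B$ over $\Delta_{n+1}$ (the ``$T$-square'') and one relating $p$ to $\mathrm{id}_{B^n}\times p$ over $\Delta_{n+1}$ (the ``$E$-square''). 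Granting these, the Join Theorem yields a homotopy pullback whose top corner is $(*^{n-1}_B E)*_B E=*^n_B E$ and whose bottom corner is $(T^{n-1}(p)\times B)*_{B^{n+1}}(B^n\times E)=T^n(p)$; its vertical co-whisker maps are precisely $j^n_p$ and $\kappa_n$, and the induced horizontal co-whisker map is the map we name $\widetilde\Delta_{n+1}$. This is exactly the asserted square.

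To obtain the two squares I would paste each against a projection square furnished by the preceding Lemma and invoke the Prism Lemma. For the $E$-square, place it to the left of the Lemma's square for $f=p$ and $Z=B^n$ (with the coordinates arranged so that the projections hit the last factor); the outer rectangle then has horizontal composites $\mathrm{id}_E$ and $\mathrm{id}_B$, since projecting $\Delta_{n+1}$ onto the last coordinate returns $\mathrm{id}_B$, so it is a trivial homotopy pullback, and as the right-hand square is a homotopy pullback by the Lemma, the Prism Lemma forces the $E$-square to be one too. For the $T$-square, place it to the left of the Lemma's square for $f=\kappa_{n-1}$ and $Z=B$, now projecting onto the first factor ($B^n\times B\to B^n$ and $T^{n-1}(p)\times B\to T^{n-1}(p)$); this time the outer rectangle is exactly the inductive hypothesis square, because projecting $\Delta_{n+1}$ onto the first $n$ coordinates gives $\Delta_n$ and the top composite is $\widetilde\Delta_n$, so it is a homotopy pullback, and the Prism Lemma again yields the $T$-square.

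The homotopy-theoretic content is thus entirely carried by the Join Theorem, the preceding Lemma and the Prism Lemma; the part needing genuine care is the bookkeeping. I would have to keep the splitting $B^{n+1}=B^n\times B$ straight, verify commutativity of the two pasted diagrams (for the $T$-square this uses the commutativity of the inductive hypothesis square together with $\Delta_{n+1}=(\Delta_n,\mathrm{id}_B)$), and, most importantly, check that the three co-whisker maps the Join Theorem manufactures really do coincide with $j^n_p$, $\kappa_n$ and $\widetilde\Delta_{n+1}$. I expect this last identification, rather than any subtler homotopy pullback issue, to be the main point to watch.
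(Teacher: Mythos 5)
Your proposal is correct and is essentially identical to the paper's own proof: induction on $n$, with the two auxiliary homotopy pullback squares (your ``$T$-square'' and ``$E$-square'') obtained exactly as in the paper by pasting the projection squares of the preceding Lemma against, respectively, the inductive hypothesis square and a trivial square, then invoking the Prism Lemma, and finally applying the Join Theorem over $\gamma=\Delta_{n+1}$. The only difference is cosmetic: the paper treats the identification of the resulting co-whisker maps with $j^n_p$, $\kappa_n$ and $\widetilde{\Delta}_{n+1}$ as immediate from the definitions, whereas you rightly flag it as the bookkeeping point to check.
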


\begin{proof}
We proceed by induction on the integer $n\geq 0.$ For $n=0$ the
result is obviously true. Now suppose the result is true for
$n-1.$ Considering the Prism Lemma, the above lemma and the
induction hypothesis to the following homotopy commutative diagram
$$\xymatrix{
{*^{n-1}_B E} \ar[d]^{j^{n-1}_p} \ar[rr]^{(\widetilde{\Delta }
_n,j^{n-1}_p)} & & {T^{n-1}(p)\times B} \ar[d]^{\kappa
_{n-1}\times
id_B} \ar[rr]^{pr} & & {T^{n-1}(p)} \ar[d]^{\kappa _{n-1}} \\
{B} \ar[rr]_{\Delta _{n+1}} & & {B^{n+1}} \ar[rr]_{pr} &  & {B^n}
}$$ \noindent we have that the left square is a homotopy pullback.
Now applying a similar argument to the homotopy commutative
diagram
$$\xymatrix{
{E} \ar[d]^{p} \ar[rr]^(.4){(p,...,p,id_E)} & & {B^n\times E}
\ar[d]^{id_{B^n}\times p} \ar[rr]^{pr} & & {E} \ar[d]^{p} \\
{B} \ar[rr]_{\Delta _{n+1}} & & {B^{n+1}} \ar[rr]_{pr} &  & {B}
}$$ \noindent we have that the left square is also a homotopy
pullback. Summarizing, we have obtained a homotopy commutative
diagram in which the squares are homotopy pullbacks
$$\xymatrix{
{E} \ar[rr]^{p} \ar[d]^{(p,...,p,id_E)} & & {B} \ar[d]^{\Delta
_{n+1}}  & & {*^{n-1}_BE} \ar[ll]_{j^{n-1}_p}
\ar[d]^{(\widetilde{\Delta } _n,j^{n-1}_p)} \\
{B^n\times E} \ar[rr]_{id_{B^n}\times p} & & {B^{n+1}} & &
{T^{n-1}(p)\times B} \ar[ll]^{\kappa _{n-1}\times id_B} }$$
Applying the Join Theorem to this diagram we conclude the
proof.
\end{proof}

\begin{corollary}
\label{characterization-secat}
{\rm Let $p:E\rightarrow B$ be any map, where $B$ is a normal
space. Then one has \secat$(p)\leq n$ if and only if there is, up to
homotopy, a lift of the $(n+1)$-diagonal map }
$$\xymatrix{
{} & {T^n(p)} \ar[d]^{\kappa _n} \\
{B} \ar@{.>}[ur] \ar[r]_{\Delta _{n+1}} & {B^{n+1}} }$$
\end{corollary}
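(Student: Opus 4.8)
The plan is to deduce the statement by combining the classical Ganea-type characterization (the theorem asserting $\secat(p)\le n$ if and only if $j^n_p$ admits a homotopy section) with the homotopy pullback just established (the theorem relating $j^n_p$ and $\kappa_n$ via $\Delta_{n+1}$ and $\widetilde{\Delta}_{n+1}$). The Ganea characterization requires $B$ to be at least normal --- paracompact in the version stated, but normal suffices by the cited improvement in \cite{F-G-K-V} --- and this is precisely where the normality hypothesis enters. It therefore suffices to show that $j^n_p:*^n_B E\to B$ admits a homotopy section if and only if $\Delta_{n+1}:B\to B^{n+1}$ admits a homotopy lift through $\kappa_n:T^n(p)\to B^{n+1}$.

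For the ``only if'' direction, suppose $s:B\to *^n_B E$ is a homotopy section of $j^n_p$, so that $j^n_p\circ s\simeq \mathrm{id}_B$. I would set $\sigma=\widetilde{\Delta}_{n+1}\circ s:B\to T^n(p)$ and use the homotopy commutativity of the pullback square to compute $\kappa_n\circ\sigma=\kappa_n\circ\widetilde{\Delta}_{n+1}\circ s\simeq\Delta_{n+1}\circ j^n_p\circ s\simeq\Delta_{n+1}$. Hence $\sigma$ is the desired homotopy lift of $\Delta_{n+1}$.

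For the converse, suppose $\sigma:B\to T^n(p)$ satisfies $\kappa_n\circ\sigma\simeq\Delta_{n+1}$. Then the maps $\mathrm{id}_B:B\to B$ and $\sigma:B\to T^n(p)$, together with the chosen homotopy $\kappa_n\circ\sigma\simeq\Delta_{n+1}=\Delta_{n+1}\circ\mathrm{id}_B$, form a homotopy-commutative cone over the cospan $B\xrightarrow{\Delta_{n+1}}B^{n+1}\xleftarrow{\kappa_n}T^n(p)$. By the weak universal property of the homotopy pullback $*^n_B E$ (which is exactly what the homotopy pullback conclusion of the previous theorem provides), there is a map $s:B\to *^n_B E$ together with a homotopy $j^n_p\circ s\simeq\mathrm{id}_B$ (and also $\widetilde{\Delta}_{n+1}\circ s\simeq\sigma$, which we do not need). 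This $s$ is the required homotopy section of $j^n_p$.

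The only delicate point is this reverse implication: one must genuinely invoke the weak universal property of the homotopy pullback and carry the homotopies along rather than argue with strict equalities, so the bookkeeping of the cone and its factorization is where care is needed. The forward implication and the reduction to the Ganea characterization are formal. I would emphasize in writing that it is the pullback being a \emph{homotopy} pullback, and not merely homotopy-commutative, that legitimizes factoring the cone through $*^n_B E$.
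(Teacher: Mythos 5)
Your proposal is correct and follows exactly the route the paper intends: the corollary is deduced from the Ganea-type characterization (with the normality improvement of \cite{F-G-K-V}) together with the homotopy pullback square relating $j^n_p$ and $\kappa_n$, using the standard fact that in a homotopy pullback a homotopy section of one leg corresponds to a homotopy lift through the other. Your careful spelling-out of both directions, including the use of the weak universal property in the converse, is precisely the argument the paper leaves implicit.
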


Now we will see that $T^n(p)$ coincides with Fassò's fatwedge when
$p$ is a cofibration.

\begin{lemma}
Let $p:E\rightarrow B$ be any map. Then the following square is a
homotopy pullback
$$
\xymatrix{ {T^{n-1}(p)\times E} \ar[rr]^(.55){\kappa _{n-1}\times
id_E} \ar[d]_{id\times p} & & {B^n\times E}
\ar[d]^{id_{B^n}\times p} \\
{T^{n-1}(p)\times B} \ar[rr]_(.55){\kappa _{n-1}\times id_B} & &
{B^{n+1}} }
$$
\end{lemma}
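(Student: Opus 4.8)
The plan is to exhibit the square as the left-hand part of a horizontal pasting of two squares, each of which is an instance of the product Lemma recalled above, and then to cancel by means of the Prism Lemma. Conceptually the statement is transparent: the square is the product of two squares that are trivially homotopy pullbacks --- one having the two copies of $\kappa_{n-1}$ as horizontal maps and the identities of $B^n$ as vertical maps, the other having $p$ (twice) as vertical maps and identities as horizontal maps --- and a product of homotopy pullbacks is a homotopy pullback. To argue strictly within the tools developed above, however, I would proceed as follows.

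First I would insert on the right an intermediate column, with top vertex $E$, bottom vertex $B$ and vertical map $p$, joined to the middle column by the obvious projections, obtaining the homotopy commutative diagram
$$\xymatrix{
{T^{n-1}(p)\times E} \ar[r]^{\kappa_{n-1}\times id_E} \ar[d]_{id\times p} & {B^n\times E} \ar[r]^{pr} \ar[d]^{id_{B^n}\times p} & {E} \ar[d]^{p} \\
{T^{n-1}(p)\times B} \ar[r]_{\kappa_{n-1}\times id_B} & {B^{n+1}} \ar[r]_{pr} & {B.} }$$
Up to the harmless permutation of the two product factors, the right-hand square is exactly the product Lemma applied to $f=p$ and $Z=B^n$ (with top-right vertex $E$ and bottom-right vertex $B$); hence it is a homotopy pullback.

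Next I would check that the outer rectangle is again an instance of the same Lemma. The top composite $T^{n-1}(p)\times E\to B^n\times E\to E$ equals the projection onto $E$, since projecting onto the $E$-factor annihilates the $\kappa_{n-1}$-component; likewise the bottom composite is the projection $T^{n-1}(p)\times B\to B$. Thus the outer rectangle is the product Lemma applied to $f=p$ and $Z=T^{n-1}(p)$, and is therefore a homotopy pullback.

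Finally the Prism Lemma applies to this horizontal pasting: since the right-hand square is a homotopy pullback, the outer rectangle is a homotopy pullback if and only if the left-hand square is. As both the right square and the outer rectangle have just been shown to be homotopy pullbacks, the left-hand square --- which is precisely the square in the statement --- is a homotopy pullback. I do not expect a genuine obstacle here; the only points requiring care are the identification of the two horizontal composites with plain projections and the cosmetic reordering of factors needed to match the product Lemma verbatim.
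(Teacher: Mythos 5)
Your proof is correct, but it follows a genuinely different route from the paper's. The paper proves this lemma by direct computation: it forms the standard (path-space) homotopy pullback $L$ of $\kappa_{n-1}\times id_B$ and $id_{B^n}\times p$, writes down the canonical comparison map $\pi\colon T^{n-1}(p)\times E\to L$, $\pi(a,y)=(a,p(y),\varepsilon_{(\kappa_{n-1}(a),p(y))},\kappa_{n-1}(a),y)$, and verifies with explicit formulas that $\pi$ is a homotopy equivalence (retraction $\xi(a,x,\gamma,b,y)=(a,y)$ and an explicit homotopy $\pi\xi\simeq id$). You instead paste the given square horizontally against the projection square supplied by the product lemma (with $f=p$, $Z=B^n$), identify the outer rectangle as another instance of that same lemma (with $f=p$, $Z=T^{n-1}(p)$), and cancel with the Prism Lemma; the two identifications of composites with plain projections that you flag are indeed immediate, and since all your squares commute strictly there is no coherence issue in invoking the Prism Lemma, nor any circularity, as both ingredients precede this lemma in the paper. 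Your route is shorter, purely formal (it would survive in any setting where the Prism Lemma and the product lemma hold), and it is in fact exactly the pasting-and-cancellation technique the paper itself uses one result earlier to compare $*^n_B E$ with $T^n(p)$, so it is stylistically consistent with the paper; what the paper's explicit argument buys is self-containedness at the point-set level and a concrete description of the homotopy equivalence, at the cost of path-space bookkeeping. Your opening conceptual remark (the square is a factorwise product of two evidently homotopy-pullback squares) is a third valid argument, but, as you correctly note, it rests on the fact that finite products preserve homotopy pullbacks, which the paper never states, so your decision to argue via the Prism Lemma instead was the right call.
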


\begin{proof}
Take the standard homotopy pullback of $\kappa _{n-1}\times id_B$
and $id_{B^n}\times p,$ that is
$$\begin{array}{lcr}
L&=&\{(a,x,\gamma ,b,y)\in T^{n-1}(p)\times B\times
(B^{n+1})^I\times B^n\times E \mbox{ such that }\quad \quad\\
  && \gamma (0)=(\kappa _{n-1}(a),x),\hspace{3pt}\gamma
(1)=(b,p(y))\}
\end{array}$$
Consider the maps $$\xi :L\rightarrow
T^{n-1}(p)\times E\hspace{5pt}\mbox{and}\hspace{5pt}\pi
:T^{n-1}(p)\times E\rightarrow L$$ \noindent given by
$\xi(a,x,\gamma ,b,y)=(a,y)$ and $\pi (a,y)=(a,p(y),\varepsilon
_{(\kappa _{n-1}(a),p(y))},\kappa _{n-1}(a),y)$, where
$\varepsilon _{(\kappa _{n-1}(a),p(y))}$ denotes the constant path
on $(\kappa _{n-1}(a),p(y)).$
Then $\xi \pi =id$ and $\pi \xi \simeq id$ through the homotopy
$F:L\times I\rightarrow L$ defined as
$$F(a,x,\gamma ,b,y;t)=(a,\gamma _2(t),\delta (t),\gamma
_1(1-t),y)$$ \noindent where $\gamma =(\gamma _1,\gamma _2)$ with
$\gamma _1:I\rightarrow B^n,$ $\gamma _2:I\rightarrow B$ and
$$\delta (t)(s)=(\gamma _1(s(1-t)),\gamma _2((1-s)t+s))$$
This proves that $\pi$ is a homotopy equivalence and therefore the
statement of the lemma.
\end{proof}

Taking into account this fact we have that $T^n(p)$ may be
directly obtained as the homotopy pushout of $\kappa _{n-1}\times
id_E$ and $id\times p.$ In particular when $p$ is a cofibration we
can consider the honest pushout. Using induction we have:

\begin{corollary}
\label{fatwedge-cofibration}
Let $p:E\rightarrowtail B$ be a cofibration. Then
$$T^n(p)=\{(b_0,b_1,...,b_n)\in B^{n+1}\, |\,b_i\in E,\hspace{3pt}
\mbox{for some}\hspace{3pt}i\}$$ \noindent and the natural
inclusion $\kappa _{n}:T^n(p)\rightarrowtail B^{n+1}$ is a
cofibration.
\end{corollary}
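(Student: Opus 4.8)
The plan is to argue by induction on $n$, turning the join-theoretic definition of $T^n(p)$ into an honest pushout (which is legitimate precisely because $p$ is a cofibration) and then identifying that pushout concretely as a union of subspaces of $B^{n+1}$. The base case $n=0$ is immediate: by definition $\kappa_0=p$, so $T^0(p)=E=\{b_0\in B\,|\,b_0\in E\}$ and $\kappa_0$ is a cofibration by hypothesis.

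For the inductive step I would assume that $T^{n-1}(p)$ is the fatwedge subspace of $B^n$ and that $\kappa_{n-1}:T^{n-1}(p)\rightarrowtail B^n$ is a cofibration. The lemma immediately preceding this corollary identifies the homotopy pullback of $\kappa_{n-1}\times id_B$ and $id_{B^n}\times p$ with $T^{n-1}(p)\times E$, the two projections being $id\times p$ and $\kappa_{n-1}\times id_E$. Hence $T^n(p)$, which is the join of $\kappa_{n-1}\times id_B$ and $id_{B^n}\times p$, is the homotopy pushout of the span
$$\xymatrix{ T^{n-1}(p)\times B & T^{n-1}(p)\times E \ar[l]_{id\times p} \ar[r]^{\kappa_{n-1}\times id_E} & B^n\times E. }$$
Since $p$ and $\kappa_{n-1}$ are cofibrations, both legs of this span are cofibrations, so the homotopy pushout is computed by the honest pushout.

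Next I would identify this honest pushout inside $B^{n+1}=B^n\times B$. Using the inductive description of $T^{n-1}(p)$, both $T^{n-1}(p)\times B$ and $B^n\times E$ are closed subspaces of $B^{n+1}$ (closed because $\kappa_{n-1}$ and $p$ are closed cofibrations), and their intersection is exactly $T^{n-1}(p)\times E$. The canonical continuous bijection from the pushout onto the union $(T^{n-1}(p)\times B)\cup(B^n\times E)$ is then a homeomorphism, and this union is precisely $\{(b_0,\dots,b_n)\in B^{n+1}\,|\,b_i\in E\text{ for some }i\}$, with $\kappa_n$ its inclusion into $B^{n+1}$. Finally, to see that $\kappa_n$ is a cofibration I would invoke the union theorem for cofibrations: the inclusions $T^{n-1}(p)\times B\hookrightarrow B^{n+1}$ and $B^n\times E\hookrightarrow B^{n+1}$ are cofibrations (products of a cofibration with an identity), their intersection $T^{n-1}(p)\times E\hookrightarrow B^{n+1}$ is a cofibration (product of two cofibrations), and hence the inclusion of their union is a cofibration.

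The main obstacle is the point-set step: verifying that the honest pushout of the closed span coincides, as a topological space and not merely as a set, with the union inside $B^{n+1}$, and that the inclusion of this union is a cofibration. Both reduce to standard gluing results for closed cofibrations (NDR pairs) in the category of well-pointed compactly generated Hausdorff spaces, but they must be applied with care to the products appearing in the span; everything else in the argument is formal manipulation of (homotopy) pushouts and pullbacks.
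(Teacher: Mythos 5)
Your proof is correct and takes essentially the same route as the paper: the lemma immediately preceding the corollary identifies the homotopy pullback of $\kappa_{n-1}\times id_B$ and $id_{B^n}\times p$ with $T^{n-1}(p)\times E$, so $T^n(p)$ is the homotopy pushout of the span $T^{n-1}(p)\times B \leftarrow T^{n-1}(p)\times E \rightarrow B^n\times E$, which by cofibrancy can be replaced by the honest pushout, and induction on $n$ finishes the argument. The paper leaves the point-set identification of that pushout with the union inside $B^{n+1}$ and the cofibration claim implicit ("Using induction we have"), whereas you spell them out via the gluing lemma for closed subspaces and the union theorem for cofibrations; this is exactly the content the paper suppresses.
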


Rigorously, this is an equality up to homotopy, since $T^n(p)$ is
only defined up to homotopy. However, in the sequel, when $p$ is
cofibration, the notation $T^n(p)$ will denote exactly this
subspace of $B^{n+1}$. Notice that if $E=*,$ then $T^n(p)$ is the
usual $n$-fatwedge of $B,$ given by $T^n(B)=\{(b_0,b_1,...,b_n)\in
B^{n+1}\, |\,b_i=*,\hspace{3pt} \mbox{for some}\hspace{3pt}i\}$.
Also observe that if $p:E\rightarrow B$ is any map, then we can
factorize $p$ as a cofibration $\hat{p}:E\rightarrowtail
\widehat{B}$ followed by a homotopy equivalence
$h:\widehat{B}\stackrel{\simeq }{\rightarrow }B$. In this case, we
have $\mbox{secat}(p)=\mbox{secat}(\hat{p})$ and $T^n(p),$
$T^n(\hat{p})$ are related by a homotopy commutative diagram
$$\xymatrix{
{T^n(\hat{p})} \ar[r]_{\simeq } \ar@{ >->}[d] & {T^n(p)} \ar[d] \\
{(\widehat{B})^{n+1}} \ar[r]_{h^{n+1}}^{\simeq } & {B^{n+1}} }$$
\noindent where the horizontal arrows are homotopy equivalences.
This last observation together with Corollaries
\ref{characterization-secat} and \ref{fatwedge-cofibration} shows
that our characterization of sectional category given in Corollary
\ref{characterization-secat} is equivalent to the one given by
Fassò in \cite{F}.

We end this section with a useful property of the sectional fatwedge of a cofibration.

\begin{proposition}\label{hp-sectional-fatwedge}
Given any pushout of the form
$$\xymatrix{
{E} \ar@{ >->}[d]_p \ar[r] & {E'} \ar@{ >->}[d]^{p'} \\
{B} \ar[r] & {B'} }$$ \noindent then, for any $n\geq 0,$ there
exists a pushout
$$\xymatrix{
{T^n(p)} \ar@{ >->}[d]_{\kappa _n} \ar[r] & {T^n(p')} \ar@{ >->}[d]^{\kappa '_n} \\
{B^{n+1}} \ar[r] & {(B')^{n+1}} }$$
\end{proposition}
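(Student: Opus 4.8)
The plan is to argue by induction on $n$, exploiting the explicit pushout presentation of the sectional fatwedge that is available for cofibrations. For $n=0$ we have $T^0(p)=E$, $\kappa_0=p$, $T^0(p')=E'$ and $\kappa'_0=p'$, so the square to be proved is literally the given pushout and there is nothing to do. Throughout I would work in the category of compactly generated Hausdorff spaces, where each product functor $-\times Z$ is a left adjoint and hence preserves pushouts; this is the single point-set fact that drives the whole computation.

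For the inductive step I would use three ingredients. First, the Lemma and the remark preceding Corollary~\ref{fatwedge-cofibration}: since $p$ is a cofibration, $T^n(p)$ is the \emph{honest} pushout of $\kappa_{n-1}\times id_E$ and $id\times p$, with $\kappa_n$ the induced map into $B^{n+1}=B^n\times B$, and similarly $T^n(p')$ is the honest pushout of $\kappa'_{n-1}\times id_{E'}$ and $id\times p'$ over $B'$. Thus no homotopy-pushout corrections are needed and I may manipulate ordinary colimits. Second, the inductive hypothesis, i.e. the pushout square relating $T^{n-1}(p),T^{n-1}(p'),B^n,(B')^n$. Third, the given pushout $B'=B\cup_E E'$.

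The core of the argument is to compute $(B')^{n+1}=(B')^n\times B'$ as a single colimit and then reorganize it. Writing $(B')^n$ via the inductive hypothesis and $B'$ as the pushout $B\cup_E E'$, and using that products preserve pushouts in each variable, $(B')^{n+1}$ is the colimit of the $3\times 3$ diagram obtained as the external product of the span $B^n\leftarrow T^{n-1}(p)\to T^{n-1}(p')$ with the span $B\leftarrow E\to E'$. I would then split this nine-object diagram into three blocks: the single corner $B^n\times B$; the upper-left span $\{B^n\times E,\ T^{n-1}(p)\times B,\ T^{n-1}(p)\times E\}$ with apex $T^{n-1}(p)\times E$; and the remaining bottom-right ``L'' of five objects. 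By the honest-pushout presentation recalled above, the colimits of the first two blocks are exactly $B^{n+1}$ and $T^n(p)$, while a short cofinality/pasting computation (using once more that products preserve pushouts, together with the inductive hypothesis multiplied by $E'$ and $T^{n-1}(p')$ multiplied by the span $B\leftarrow E\to E'$) identifies the colimit of the L-block with $T^n(p')$. Since every arrow between distinct blocks runs out of the middle block, the induced diagram on the three blocks is a span $B^{n+1}\leftarrow T^n(p)\to T^n(p')$, and the pasting law for colimits exhibits $(B')^{n+1}$ as its pushout, which is precisely the asserted square.

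The main obstacle is bookkeeping rather than conceptual. One must verify that the three blocks are glued along the correct maps, namely that the comparison $T^n(p)\to B^{n+1}$ really is $\kappa_n$ and that $T^n(p)\to T^n(p')$ is the natural map of fatwedges, and one must justify the regrouping of the $3\times 3$ colimit (a Fubini/cofinality statement for iterated pushouts). I would emphasize that it is this regrouping, and not a set-level description, that makes the proof work: because $B^{n+1}\to(B')^{n+1}$ and $T^n(p)\to T^n(p')$ need not be injective when $E\to E'$ fails to be, the square must be treated as a genuine pushout of spaces and cannot in general be reduced to a union of subspaces inside $(B')^{n+1}$.
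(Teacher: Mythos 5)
Your argument is correct and shares the paper's inductive skeleton, but it organizes the key colimit computation around a genuinely different decomposition, so a comparison is in order. The paper also inducts on $n$, also works with the honest-pushout presentation of $T^n(p)$ for a cofibration $p$, and also reduces everything to a nine-object colimit manipulation in compactly generated spaces; the difference is that it first isolates a reusable statement, Lemma \ref{product_of_pushouts}: for two honest pushouts along cofibrations, the pushout-product maps $(A\times X')\cup_{A\times A'}(X\times A')\to X\times X'$ and $(Y\times Z')\cup_{Y\times Y'}(Z\times Y')\to Z\times Z'$ fit into a pushout square. The inductive step of the Proposition is then a single application of this lemma, with the first pushout the inductive hypothesis $(B')^n=B^n\cup_{T^{n-1}(p)}T^{n-1}(p')$ and the second the given square $B'=B\cup_E E'$; your honest-pushout presentation of $T^n(p)$ is exactly what identifies $\kappa_n$ with the pushout-product of $\kappa_{n-1}$ and $p$. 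The lemma itself is proved by writing down a $3\times 3$ diagram whose entries include the already-glued spaces ($Y\times Z'$, a repeated $A\times X'$, etc.), chosen so that computing its colimit rows-first and columns-first yields, by plain iterated pushouts (Fubini, products preserving pushouts, pasting), the two descriptions $Z\times Z'$ and the pushout of the asserted square --- no cofinality argument enters. You instead take the colimit of the genuine external product of the two spans and regroup it into blocks of sizes $1$, $3$ and $5$. This works, but be aware that the regrouping is the real mathematical content, not mere bookkeeping: a partition of an index category into three blocks with all cross-arrows leaving the middle block does \emph{not} in general present the colimit as a pushout of the block colimits (the induced maps from the middle block colimit need not even be well defined), so you must actually verify, for this grid poset, that the corner $B^n\times B$ is terminal in its four-object piece and that the L-shaped block is final in the eight-object complement of the corner; that is the ``short cofinality computation'' you allude to, and it is precisely what the paper's tailor-made diagram is designed to avoid. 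What your route buys is canonicity (the nine-object diagram is just span times span, with no auxiliary glued objects); what the paper's route buys is a standalone, reusable product-of-pushouts lemma and an argument that never leaves iterated pushouts. Your closing observation --- that the square must be treated as a genuine pushout because $T^n(p)\to T^n(p')$ and $B^{n+1}\to (B')^{n+1}$ need not be injective --- is a good one, and is consistent with the paper's decision to argue by colimit uniqueness rather than by the subspace description of Corollary \ref{fatwedge-cofibration}.
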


We remark that this result can also be stated in terms of general
homotopy pushouts, in which $p$ and $p'$ are not necessarily
cofibrations. In the form above, it follows directly from an
inductive argument and the lemma below which is certainly
well-known so that we just give the idea of its proof.

\begin{lemma}\label{product_of_pushouts}
Consider the following honest pushout diagrams, in which $f$ and
$f'$ are cofibrations:
$$\xymatrix{
A\ar[r]^{g} \ar@{ >->}[d]_ f & Y\ar@{ >->}[d]^{\bar f} &\qquad &
A'\ar[r]^{g'} \ar@{ >->}[d]_{f'} & Y'\ar@{ >->}[d]^{\bar f'} \\
X \ar[r]_{\bar g} & Z &\qquad & X' \ar[r]_{\bar g'} & Z'. }$$ Then
the cofibrations $(A\times X')\cup_{A\times A'} (X\times A')\to
X\times X'$ and $(Y\times Z')\cup_{Y\times Y'} (Z\times Y')\to
Z\times Z',$ induced respectively by $f\times id$ and $id\times
f'$ and by $\bar f\times id$ and $id\times \bar f',$ fit in the
following commutative square (with the obvious maps). Furthermore,
this square is a pushout:
$$\xymatrix{
(A\times X')\cup_{A\times A'} (X\times A')\ar[r] \ar@{ >->}[d] & (Y\times Z')\cup_{Y\times Y'} (Z\times Y')\ar@{ >->}[d]  \\
X\times X' \ar[r] & Z\times Z'. }$$
\end{lemma}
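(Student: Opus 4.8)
The plan is to recognize both vertical maps as pushout-products (Leibniz products) of cofibrations and to exploit that, in the cartesian closed category of compactly generated Hausdorff spaces, the product functors $(-)\times W$ and $W\times(-)$ are left adjoints and hence preserve all colimits, in particular honest pushouts. Write $P=(A\times X')\cup_{A\times A'}(X\times A')$ and $Q=(Y\times Z')\cup_{Y\times Y'}(Z\times Y')$ for the two domains. The left vertical map $P\to X\times X'$ is the pushout-product of $f$ and $f'$, and the right vertical map $Q\to Z\times Z'$ is the pushout-product of $\bar f$ and $\bar f'$; since the pushout-product of two (Hurewicz) cofibrations is again a cofibration, a classical fact provable through the NDR-pair characterization, this already yields the cofibration part of the statement. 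Moreover, once the square is shown to be a pushout, the right vertical map is a cofibration also as the cobase change of the left one, so it suffices to establish that the square is a pushout.

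For the pushout claim I would argue one variable at a time. Observe first that $\bar f$ is the cobase change of $f$ along $g$ and $\bar f'$ the cobase change of $f'$ along $g'$. Fixing the second cofibration $f'$, the domain of the pushout-product with $f'$ is assembled from the functors $(-)\times X'$ and $(-)\times A'$ together with a single pushout; as each of these commutes with the colimit defining $Z=X\cup_A Y$, the cobase change of $f$ into $\bar f$ induces a pushout square
$$\xymatrix{
P \ar[r] \ar@{ >->}[d] & (Y\times X')\cup_{Y\times A'}(Z\times A') \ar@{ >->}[d] \\
X\times X' \ar[r]_(.45){\bar g\times id} & Z\times X'. }$$
Applying the symmetric statement in the second variable, now with the fixed first cofibration $\bar f$ and the cobase change of $f'$ into $\bar f'$ along $g'$, gives a second pushout square with bottom map $id\times\bar g'\colon Z\times X'\to Z\times Z'$ and top map landing in $Q$. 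Pasting these two pushout squares along their common edge, via the pasting law for pushouts, produces exactly the square of the statement: its bottom composite is $\bar g\times\bar g'$ and its top composite is the map induced by $g$ and $g'$. Hence that square is a pushout.

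Equivalently, and this is where the actual bookkeeping lives, distributing the product over the two pushouts exhibits $Z\times Z'$ as the colimit of the $3\times 3$ diagram whose vertices are the nine products of $\{A,X,Y\}$ with $\{A',X',Y'\}$. One then groups these nine cells into the corner $X\times X'$, the span $P$, and the remaining five cells, which assemble precisely into $Q$ with attaching map $P\to Q$ induced by $g$ and $g'$; the pasting law again identifies the colimit with the pushout of $X\times X'\leftarrow P\to Q$. I expect the main obstacle to be exactly this combinatorial verification: checking that the five leftover cells glue along their prescribed overlaps to reproduce $Q$ and nothing more, and that the induced maps are the intended ones, so that the grouping realizes the asserted square rather than some coarser colimit. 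Everything else, namely colimit preservation by products and the pasting law, is formal; the cofibration hypotheses enter only to guarantee that these honest pushouts are well-behaved, so that for instance the images of $X$ and $Y$ meet in the image of $A$ inside $Z$, which is what makes the cellwise grouping legitimate.
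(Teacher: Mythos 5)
Your proof is correct, and it runs on exactly the same engine as the paper's: in compactly generated spaces $(-)\times W$ is a left adjoint, so honest pushouts are preserved by products, and everything else is formal colimit manipulation. The difference is organizational. The paper's entire proof is an appeal to the uniqueness of the colimit of one cleverly chosen $3\times 3$ diagram, with rows $Y\times Z' \leftarrow A\times Y'\to X\times Y'$, \ $A\times X'\leftarrow A\times A'\to X\times A'$, \ $A\times X'= A\times X'\to X\times X'$: the three row-colimits are precisely the span $Q\leftarrow P\to X\times X'$ of the statement, while the three column-colimits form $Y\times Z'\leftarrow A\times Z'\to X\times Z'$, whose pushout is $Z\times Z'$. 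In other words, the ``grouping of leftover cells'' that you single out as the main obstacle is hard-wired into the paper's choice of diagram rather than verified after the fact. Your primary, one-variable-at-a-time route is a genuine alternative decomposition, and it actually dissolves that obstacle rather than confronting it: your first intermediate square is a pushout by two applications of the ordinary pasting law (paste the defining pushout of $P$ along $A\times X'\to Y\times X'$ to identify $(Y\times X')\cup_{A\times X'}P$ with $(Y\times X')\cup_{Y\times A'}(Z\times A')$, then read the desired square as the lower half of the rectangle whose outer square is $(-)\times X'$ applied to $Z=X\cup_A Y$), and symmetrically for the second; no nine-cell bookkeeping or cofinality check ever arises. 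That worry only afflicts your secondary, ``equivalently'' formulation with the full grid of nine products, which is the paper's argument but with a less economical diagram. One small correction: the pushout assertion is purely formal and needs no cofibration hypotheses at all; cofibrancy is used only for the claims that the two vertical maps are cofibrations --- which you rightly settle by the pushout-product/NDR-pair theorem, a point the paper leaves implicit --- and, later in the paper, to make these honest pushouts homotopy invariant, not, as your last sentence suggests, to legitimize the colimit computation itself.
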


\begin{proof}
The fact that the diagram is a pushout follows from the unicity of
the colimit of the following commutative diagram:
$$\xymatrix{
Y\times Z' & A\times Y'\ar@{ >->}[r]\ar[l] & X\times Y'\\
A\times X' \ar[u] & A\times A' \ar[u] \ar@{ >->}[d]\ar@{ >->}[r]\ar@{ >->}[l] & X\times A'\ar@{ >->}[d]\ar[u]   \\
A\times X' \ar@{=}[u] & A\times X' \ar@{=}[l]  \ar@{ >->}[r] & X\times X'.
}$$
\end{proof}

\section{Weak sectional category.}

For any $n\geq 0$ we shall denote by $B^{[n+1]}$ the $(n+1)$-fold
smash-product and by $\bar \Delta_{n+1}=\bar \Delta^B_{n+1}:B\to
B^{n+1}\to B^{[n+1]}$ the reduced diagonal. Recall that the weak
category of $B$, $\wcat(B)$, introduced by Berstein and Hilton
\cite{B-H}, is the least integer $n$ such that $\bar \Delta_{n+1}$
is homotopically trivial. This is a lower bound for the L-S
category of $B$ since $\cat(B)\leq n$ if and only if the diagonal
$\Delta^B_{n+1}:B\to B^{n+1}$ lifts, up to homotopy, in the
fatwedge $T^n(B)=\{(b_0,b_1,...,b_n)\in B^{n+1}:b_i=*,\hspace{3pt}
\mbox{for some}\hspace{3pt}i\}$ and $B^{[n+1]}=B^{n+1}/T^n(B)$.

Following the same idea, we introduce here the weak sectional
category of a map $p,$ which will be a lower bound for the
sectional category and a generalization of the weak category of
Berstein and Hilton. This section is then devoted to the study of
this invariant. In particular, as $\wcat(B)$ is, in general, a
better lower bound for $\cat(B)$ than the cuplength of $B$ (recall
that, for any commutative ring $\pi$, $\cuplength(B)$ is the
nilpotency of the reduced cohomology $\tilde H^*(B;\pi)$, that is
the least integer $n$ such that all $(n+1)$-fold cup products
vanish in $\tilde H^*(B;\pi)$), we will show that the weak
sectional category of $p$ is a better lower bound for $\secat(p)$
than the classical cohomological lower bound $\nil \ker p^*$ given
by the nilpotency of the ideal $\ker p^*\subset \tilde
H^*(B;\pi)$.

\subsection{Definition and a characterization}

\begin{definition}
Let $p: E\to B$ be a map and, for any integer $n$, let
$C_{\kappa_n}$ be the homotopy cofibre of the $n$-sectional fat
wedge of $p$, $\kappa _n:T^n(p)\rightarrow B^{n+1}$. If
$l_n:B^{n+1}\rightarrow C_{\kappa_n}$ denotes the induced map,
then the weak sectional category of $p$, denoted by $\wsecat(p)$,
is the least integer $n$ (or $\infty$) such that the composition
$$\xymatrix{ B\ar[r]^{\Delta_{n+1}}& B^{n+1}\ar[r]^{l_n}&
C_{\kappa_n}}$$ is homotopically trivial.

\end{definition}

It follows directly from the definition that, if $B$ is normal,
then $\wsecat(p)\leq \secat(p)$ and that, if $E=*$, then
$\wsecat(p:*\to B)=\wcat(B)$. We also observe that if $p=h\circ
\hat{p},$ where $\hat{p}$ is a cofibration and $h$ is a homotopy
equivalence, then $\wsecat(p)=\wsecat(\hat p)$.\\

The next result is a characterization of $\wsecat(p)$ in terms of
the homotopy cofibre of $p$ which will be useful in order to
establish the properties of the weak sectional category that will
be seen in the sequel.

\begin{proposition}\label{characterization_wsecat}
Let $p: E\to B$ be a map, $C_p$ be its homotopy cofibre, and
$j:B\to C_p$ be the induced map. Then $\wsecat(p)\leq n$ if and
only if the map $\xymatrix{ B\ar[r]^{\bar \Delta_{n+1}}&
B^{[n+1]}\ar[r]^{j^{[n+1]}}&C_p^{[n+1]}}$ is homotopically trivial.

\end{proposition}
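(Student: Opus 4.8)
The plan is to reduce everything to the cofibration case and then to identify the homotopy cofibre $C_{\kappa_n}$ with the smash power $C_p^{[n+1]}$. Since $\wsecat(p)=\wsecat(\hat p)$ and $C_p\simeq C_{\hat p}$ for the associated cofibration $\hat p$, I may assume from the start that $p:E\rightarrowtail B$ is a cofibration. Then, by Corollary~\ref{fatwedge-cofibration}, $T^n(p)$ is the honest subspace of $B^{n+1}$ and $\kappa_n$ is a cofibration, so that $C_{\kappa_n}$ is the strict quotient $B^{n+1}/T^n(p)$ and $l_n$ is the collapse map.

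The key step is to feed the honest pushout defining the cofibre of $p$,
$$\xymatrix@C=0.7cm@R=0.7cm{ {E} \ar@{ >->}[d]_p \ar[r] & {*} \ar@{ >->}[d] \\ {B} \ar[r] & {C_p} }$$
into Proposition~\ref{hp-sectional-fatwedge}, with $p':*\rightarrowtail C_p$ the basepoint inclusion. This produces a pushout
$$\xymatrix@C=0.7cm@R=0.7cm{ {T^n(p)} \ar@{ >->}[d]_{\kappa_n} \ar[r] & {T^n(p')} \ar@{ >->}[d]^{\kappa'_n} \\ {B^{n+1}} \ar[r]^{j^{n+1}} & {C_p^{n+1}} }$$
where, applying Corollary~\ref{fatwedge-cofibration} to $p'$, the upper right corner is the usual fatwedge $T^n(C_p)$ and $\kappa'_n$ is its inclusion, whose quotient is by definition $C_p^{[n+1]}$. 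Pasting this square with the pushout collapsing $T^n(C_p)$ to a point, and using that both $\kappa_n$ and $\kappa'_n$ are cofibrations, the pushout pasting law yields a canonical homeomorphism $C_{\kappa_n}=B^{n+1}/T^n(p)\cong C_p^{n+1}/T^n(C_p)=C_p^{[n+1]}$, under which $l_n$ corresponds to $q_n\circ j^{n+1}$, where $q_n:C_p^{n+1}\to C_p^{[n+1]}$ is the quotient map.

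It then remains to identify the two composites. By naturality of the diagonal one has $j^{n+1}\circ\Delta_{n+1}^B=\Delta_{n+1}^{C_p}\circ j$, so $l_n\circ\Delta_{n+1}\simeq q_n\circ\Delta_{n+1}^{C_p}\circ j=\bar\Delta_{n+1}^{C_p}\circ j$; and by naturality of the smash quotient, $j^{[n+1]}\circ q_n^B=q_n^{C_p}\circ j^{n+1}$, whence $j^{[n+1]}\circ\bar\Delta_{n+1}=j^{[n+1]}\circ q_n^B\circ\Delta_{n+1}^B=\bar\Delta_{n+1}^{C_p}\circ j$ as well. Thus $l_n\circ\Delta_{n+1}$ and $j^{[n+1]}\circ\bar\Delta_{n+1}$ coincide, so one is homotopically trivial exactly when the other is, giving $\wsecat(p)\leq n$ iff $j^{[n+1]}\circ\bar\Delta_{n+1}$ is homotopically trivial.

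The main obstacle I anticipate is bookkeeping rather than conceptual: one must verify that under the identification $C_{\kappa_n}\cong C_p^{[n+1]}$ furnished by Proposition~\ref{hp-sectional-fatwedge} the induced map $l_n$ really is $q_n\circ j^{n+1}$ and not a twisted variant, and that the reduction to the cofibration case is compatible with $j$ and with the reduced diagonals. Both points follow from the explicit and natural form of the constructions involved, so I expect no genuine difficulty beyond careful diagram chasing.
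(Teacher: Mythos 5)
Your proposal is correct and follows essentially the same route as the paper: both apply Proposition~\ref{hp-sectional-fatwedge} to the pushout defining $C_p$, use Corollary~\ref{fatwedge-cofibration} to identify $T^n$ of the basepoint inclusion with the usual fatwedge, conclude that the induced map $B^{n+1}/T^n(p)\to C_p^{[n+1]}$ is an identification under which $l_n$ becomes $q_n\circ j^{n+1}$, and finish by naturality of the (reduced) diagonal. The only organizational difference is that you reduce to the cofibration case at the outset and defer the compatibility of that reduction with $j$ and $\bar\Delta_{n+1}$ to a closing remark, whereas the paper carries the factorization $p=h\hat p$ explicitly and performs exactly that verification (via $\rho\simeq jh$ and the homotopy equivalence $h$) as the final step of its proof.
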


\begin{proof} Let $\widehat{B}$ be the mapping cylinder of $p$ and
 $\xymatrix{E\ar@{ >->}[r]^{\hat p} &\widehat{B}\ar[r]^h_{\simeq }&B}$
be the associated factorization of $p$ in a cofibration followed
by a homotopy equivalence. We have $\wsecat(p)=\wsecat(\hat p)$,
$C_p=\widehat{B}/E$ and the identification map
$\rho:\widehat{B}\to C_p$ is naturally homotopic to $jh$.
Applying Proposition \ref{hp-sectional-fatwedge} to the diagram
$$\xymatrix{E \ar[r]\ar@{ >->}[d]^{\hat p} &\ast \ar@{ >->}[d]\\
 \widehat{B}\ar[r]^{\rho} &C_p
}$$ we have, for each $n$, the following pushout:
 $$\xymatrix{T^n(\hat p) \ar[r]\ar@{ >->}[d] &T^n(C_p)\ar@{ >->}[d]\\
 \widehat{B}^{n+1}\ar[r]^{\rho^{n+1}} &C_p^{n+1}
}$$ The vertical maps are cofibrations and, by taking the
quotients, we get the commutative diagram:
 $$\xymatrix{
 \widehat{B}^{n+1}\ar[r]^{\rho^{n+1}}\ar[d]^{q_{n+1}^{\hat p}}&C_p^{n+1}\ar[d]^{q_{n+1}^{C_p}}\\
 \widehat{B}^{n+1}/T^n(\hat p)\ar[r]^(.6){\simeq}&C_p^{[n+1]}
}$$ Therefore, $\wsecat(\hat p)\leq n$ if and only if
$q_{n+1}^{C_p}\rho^{n+1}\Delta_{n+1}^{\widehat{B}}\simeq \ast$
and, since $q_{n+1}^{C_p}\rho^{n+1}=
\rho^{[n+1]}q_{n+1}^{\widehat{B}}$, we have $\wsecat(\hat p)\leq
n$ if and only if $\rho^{[n+1]}\bar
\Delta^{\widehat{B}}_{n+1}\simeq \ast$. Now, using the naturality
of the reduced diagonal and the fact that $\rho\simeq jh$ we
obtain $\rho^{[n+1]}\bar \Delta^{\widehat{B}}_{n+1}\simeq
j^{[n+1]}h^{[n+1]}\bar \Delta^{\widehat{B}}_{n+1}\simeq
j^{[n+1]}\bar \Delta^{B}_{n+1}h$. Since $h$ is a homotopy
equivalence, we finally have $\wsecat(\hat p)\leq n$ if and only
if $j^{[n+1]}\bar \Delta^B_{n+1}\simeq \ast$.
\end{proof}

As a direct consequence of this characterization we get

\begin{corollary}\label{wsecat(p)<=wcat(B)}
For any map $p: E\to B$ one has $\wsecat(p)\leq \wcat(B)$.
\end{corollary}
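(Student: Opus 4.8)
The plan is to deduce Corollary~\ref{wsecat(p)<=wcat(B)} directly from the characterization of $\wsecat(p)$ obtained in Proposition~\ref{characterization_wsecat}, comparing it with the analogous characterization of $\wcat(B)$ recalled at the beginning of this section. Recall that $\wcat(B)\leq n$ means exactly that the reduced diagonal $\bar\Delta_{n+1}^B:B\to B^{[n+1]}$ is homotopically trivial, while Proposition~\ref{characterization_wsecat} tells us that $\wsecat(p)\leq n$ is equivalent to the triviality of the composite $j^{[n+1]}\circ\bar\Delta_{n+1}^B:B\to B^{[n+1]}\to C_p^{[n+1]}$, where $j:B\to C_p$ is the natural map to the homotopy cofibre.

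The key observation is that these two maps differ only by the post-composition with $j^{[n+1]}$. Hence the first step is simply to note that if $\bar\Delta_{n+1}^B\simeq\ast$, then post-composing with the map $j^{[n+1]}:B^{[n+1]}\to C_p^{[n+1]}$ preserves null-homotopy, so that $j^{[n+1]}\circ\bar\Delta_{n+1}^B\simeq\ast$ as well. In other words, whenever $\wcat(B)\leq n$ we immediately get $\wsecat(p)\leq n$. Taking $n=\wcat(B)$ yields $\wsecat(p)\leq\wcat(B)$, which is exactly the desired inequality.

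The second step, to be safe, is to address the degenerate cases: if $\wcat(B)=\infty$ the inequality is vacuous, and if $\wcat(B)=n$ is finite the argument above applies verbatim. There is essentially no obstacle here; the only point requiring a word is that a null-homotopy of $\bar\Delta_{n+1}^B$ is literally a based homotopy $B\times I\to B^{[n+1]}$ to the basepoint, and composing it with $j^{[n+1]}$ produces a based homotopy $B\times I\to C_p^{[n+1]}$ witnessing the triviality of the composite. Since all our maps and homotopies are based and we are working in the category of well-pointed spaces, this composition is legitimate.

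I expect this corollary to be essentially immediate, with no genuine difficulty: the entire content has already been extracted into the cofibre-level characterization of Proposition~\ref{characterization_wsecat}, and the comparison with $\wcat(B)$ reduces to the trivial functoriality remark that null-homotopic maps remain null-homotopic after post-composition. The ``hardest'' part is merely to make explicit which two characterizations are being compared and to observe that the $\wsecat$ condition is obtained from the $\wcat$ condition by applying the functor $(-)^{[n+1]}$ to $j$ and post-composing, so that the former is implied by the latter.
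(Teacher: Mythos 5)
Your proof is correct and is exactly the argument the paper intends: the paper states this corollary as a direct consequence of Proposition~\ref{characterization_wsecat}, and your spelled-out reasoning (a null-homotopy of $\bar\Delta_{n+1}^B$ post-composed with $j^{[n+1]}$ yields a null-homotopy of $j^{[n+1]}\circ\bar\Delta_{n+1}^B$) is precisely that direct consequence.
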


\subsection{Weak sectional category and homotopy pushouts}
Now we establish an important relation between the weak sectional
category and homotopy pushouts. This relation will have interesting
consequences.

\begin{theorem}
{\rm Consider any homotopy commutative diagram
$$
\xymatrix{ {B'} \ar[d]^{\beta } & {A'} \ar[l]_{g'} \ar[d]^{\alpha
}
\ar[r]^{f'} & {C'} \ar[d]^{\gamma } \\
{B} & {A} \ar[l]^g \ar[r]_f & {C} }
$$
\noindent If $\delta :D'\rightarrow D$ is the induced map between
the homotopy pushouts of the corresponding rows of the diagram,
then
$$\mbox{wsecat}(\delta )\leq \mbox{wsecat}(\beta
)+\mbox{wsecat}(\gamma )+1.$$
In particular, if $A'=B'=C'=*$, we have $\mbox{wcat}(D )\leq \mbox{wcat}(B)
+\mbox{wcat}(C )+1.$}
\end{theorem}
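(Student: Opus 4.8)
The plan is to work entirely through the cofibre characterization of Proposition~\ref{characterization_wsecat}, reducing the statement to a single null-homotopy involving the reduced diagonal of the homotopy cofibre $C_\delta$ of $\delta$. Write $m=\wsecat(\beta)$ and $k=\wsecat(\gamma)$ and set $n=m+k+1$; the goal is to prove $\wsecat(\delta)\leq n$. By Proposition~\ref{characterization_wsecat} this amounts to showing that $j_\delta^{[n+1]}\bar\Delta^{D}_{n+1}\simeq \ast$, where $j_\delta:D\to C_\delta$ is the induced map and $D,D'$ denote the homotopy pushouts of the bottom and top rows.

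First I would establish the structural fact that $C_\delta$ is the homotopy pushout of the homotopy cofibres, i.e. $C_\delta\simeq C_\beta\cup_{C_\alpha}C_\gamma$, with $j_\delta$ compatible with the maps $j_\beta:B\to C_\beta$ and $j_\gamma:C\to C_\gamma$. This is a ``pushout of pushouts'' argument: the homotopy cofibre is itself a homotopy pushout along a point, and since homotopy pushouts commute with one another (which one organizes through repeated use of the pushout version of the Prism Lemma), computing the vertical cofibres of the two rows first and then the horizontal homotopy pushout yields the same space as forming $D,D'$ first and then the cofibre of $\delta$. As a sanity check, in the case $A'=B'=C'=\ast$ this gives $C_\delta\simeq D$ and recovers $\delta:\ast\to D$, for which $\wsecat(\delta)=\wcat(D)$.

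By naturality of the reduced diagonal, $j_\delta^{[n+1]}\bar\Delta^{D}_{n+1}\simeq \bar\Delta^{C_\delta}_{n+1}\,j_\delta$, so it suffices to show that the composite $D\xrightarrow{j_\delta}C_\delta\xrightarrow{\bar\Delta^{C_\delta}_{n+1}}C_\delta^{[n+1]}$ is null-homotopic. Here I would use the decomposition $C_\delta=C_\beta\cup_{C_\alpha}C_\gamma$ to filter the smash power $C_\delta^{[n+1]}$ according to how many of the $n+1$ smash coordinates are detected in the $C_\beta$-part versus the $C_\gamma$-part. Since $n+1=(m+1)+(k+1)$, a pigeonhole argument forces every subquotient of the filtration to factor either through an $(m+1)$-fold smash governed by $C_\beta$ or through a $(k+1)$-fold smash governed by $C_\gamma$. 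Combining this with the two hypotheses, which by Proposition~\ref{characterization_wsecat} read $j_\beta^{[m+1]}\bar\Delta^{B}_{m+1}\simeq\ast$ and $j_\gamma^{[k+1]}\bar\Delta^{C}_{k+1}\simeq\ast$, should kill each piece and yield the desired null-homotopy.

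The hard part will be this last step: the reduced diagonal places the \emph{same} element in every smash factor, so one cannot simply assign coordinates to the two sides by hand. The real work is to make the pigeonhole rigorous by introducing the collapse maps $C_\delta\to C_\beta/C_\alpha$ and $C_\delta\to C_\gamma/C_\alpha$ and a filtration of $C_\delta^{[n+1]}$ whose subquotients are smash products of copies of $C_\beta$, $C_\gamma$ and $C_\alpha$, and then rerouting the single-variable map $j_\delta$ through these pieces so that the given null-homotopies of $\bar\Delta^{B}_{m+1}$ and $\bar\Delta^{C}_{k+1}$ can be applied. Once this factorization is in place the inequality $\wsecat(\delta)\leq m+k+1$ follows, and the ``in particular'' statement is just the special case $A'=B'=C'=\ast$ together with the identity $\wsecat(\ast\to B)=\wcat(B)$ recorded after the definition.
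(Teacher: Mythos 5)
Your reduction via Proposition \ref{characterization_wsecat} is correct, and the structural fact $C_\delta \simeq C_\beta \cup_{C_\alpha} C_\gamma$ is true, but the argument has a genuine gap exactly where you flag ``the hard part'': the pigeonhole-by-filtration of the \emph{target} $C_\delta^{[n+1]}$ is never carried out, and it is not the right mechanism. Knowing that the composites of $\bar\Delta^{C_\delta}_{n+1} j_\delta$ with projections onto the subquotients of a filtration are null-homotopic does not make the map itself null-homotopic: null-homotopies do not assemble across a filtration without solving extension/obstruction problems at each stage. Worse, your hypotheses are statements about maps defined on $B$ and on $C$, namely $j_\beta^{[m+1]}\bar\Delta^B_{m+1}\simeq\ast$ and $j_\gamma^{[k+1]}\bar\Delta^C_{k+1}\simeq\ast$, so any proof must at some point restrict attention to $B$ and $C$ sitting inside $D$; a decomposition of the target smash power by itself never makes that connection, which is precisely why you could not ``assign coordinates by hand.''

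The paper resolves this by making the pigeonhole rigorous on the \emph{source}, not the target. Arrange $D$ to be an honest pushout $B\cup_A C$ with $\bar f: B\rightarrowtail D$ and $\bar g: C\rightarrowtail D$ cofibrations, and form the commutative cube of vertical cofibres, giving maps $h_{\beta\delta}:C_\beta\to C_\delta$ and $h_{\gamma\delta}:C_\gamma\to C_\delta$ compatible with $j_\beta$, $j_\gamma$, $j_\delta$ (only this commutativity is needed, not the pushout property of the cofibre square). Given null-homotopies $H_\beta$ and $H_\gamma$, the homotopy extension property of $\bar f$ extends $h_{\beta\delta}^{[m+1]}H_\beta$ to a homotopy $H_1: D\times I\to C_\delta^{[m+1]}$ with $H_1 i_0=j_\delta^{[m+1]}\bar\Delta^D_{m+1}$, and similarly $\bar g$ yields $H_2: D\times I\to C_\delta^{[k+1]}$. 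Then the composite
$$\xymatrix{D\times I \ar[rr]^-{(H_1,H_2)} && C_\delta^{[m+1]}\times C_\delta^{[k+1]} \ar[r] & C_\delta^{[m+k+2]}}$$
is a null-homotopy of $j_\delta^{[m+k+2]}\bar\Delta^D_{m+k+2}$: at time $1$ every point of $D$ lies in the image of $\bar f$ or of $\bar g$, so at least one of the two coordinates is at the basepoint, hence its image in the smash product is the basepoint. This covering of $D$ by the images of $B$ and $C$, combined with the smash pairing, is the missing idea your outline defers; without it (or an equivalent device) the inequality is not proved.
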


\begin{proof}
Using factorizations through a cofibration followed by a homotopy
equivalence, we can suppose without losing generality, that $g$
and $f$ are cofibrations and that $D$ is the honest pushout of $f$
and $g.$ We obtain the fo\-llo\-wing homotopy commutative cube,
where $\overline{f}$ and $\overline{g}$ (the cobase change of $f$
and $g,$ respectively) are also cofibrations:

$$\xymatrix@R=0.45cm@C=1.02cm@!0{
  & {A'} \ar[rrr]^{f'} \ar'[dd][dddd]^{\,\alpha } \ar[ddl]_{g'}
      &  &  & {C'} \ar[dddd]^{\gamma } \ar[ddl]_{\overline{g'}}     \\
                &  &  & \\
  {B'} \ar[rrr]^{\overline{f'}} \ar[dddd]_{\beta }
      &  &  & {D'} \ar@{.>}[dddd]^(.3){\delta} \\
                &  &  &   \\
  & {A} \ar@{ >->}'[rr]^(.6){f}[rrr]  \ar@{ >->}[ddl]_(.35){g} &  &  & {C}
  \ar@{ >->}[ddl]^{\overline{g}}
       \\
                &  &  &\\
  {B} \ar@{ >->}[rrr]^{\overline{f}} &  &  &  {D} \\
}$$ On the other hand, taking homotopy cofibres of the respective
columns, it is easy to obtain a cube in which all the vertical
faces (by assumption also the top face) are strictly commutative
$$\xymatrix@R=0.45cm@C=1.02cm@!0{
  & {A} \ar@{ >->}[rrr]^{f} \ar'[dd][dddd]^{j_{\alpha }} \ar@{ >->}[ddl]_{g}
      &  &  & {C} \ar[dddd]^{j_{\gamma }} \ar@{ >->}[ddl]_{\overline{g}}     \\
                &  &  & \\
  {B} \ar@{ >->}[rrr]^{\overline{f}} \ar[dddd]_{j_{\beta }}
      &  &  & {D} \ar[dddd]^(.3){j_{\delta}} \\
                &  &  &   \\
  & {C_{\alpha }} \ar'[rr]^(.6){h_{\alpha \gamma }}[rrr]  \ar[ddl]_(.35){h_{\alpha \beta}}
  &  &  & {C_{\gamma }}
  \ar[ddl]^{h_{\gamma \delta }}
       \\
                &  &  & \\
  {C_{\beta }} \ar[rrr]^{h_{\beta \delta }} &  &  &  {C_{\delta }} \\
}$$ Now suppose that $\wsecat (\beta)\leq m,$ $\wsecat (\gamma
)\leq n$ and take nullhomotopies $H_{\beta }:j_{\beta
}^{[m+1]}\bar \Delta _{m+1}^B\simeq *,$ $H_{\gamma}:j_{\gamma
}^{[n+1]}\bar \Delta _{n+1}^C\simeq *.$ From the homotopy
extension property of the cofibration
$\overline{f}:B\rightarrowtail D$ we obtain a commutative diagram
$$\xymatrix{
  {B} \ar@{ >->}[d]_{\overline{f}} \ar[rrr]^{i_0}
 & & & {B\times I} \ar[d]_{h_{\beta \delta }^{[m+1]}H_{\beta }}
               \ar@/^2pc/[ddrr]^ {\overline{f}\times id} \\
  {D} \ar[rrr]_{j_{\delta }^{[m+1]}\bar \Delta _{m+1}^D}
  \ar@/_2pc/[drrrrr]_{i_0}
                &  & & {C_{\delta }^{[m+1]}}     \\
                &       &        &  & & {D\times I}
                \ar@{.>}[ull]|-{H_1}}$$
Similarly, we also obtain a homotopy $H_2:D\times I\rightarrow
C_{\delta }^{[n+1]}$ such that $H_2i_0=j_{\delta }^{[n+1]}\bar
\Delta _{n+1}^D$ and $H_2(\overline{g}\times id)=h_{\gamma \delta
}^{[n+1]}H_{\gamma }.$ Then it is easy to check that the composite
$D\times I\stackrel{(H_1,H_2)}{\longrightarrow }C_{\delta
}^{[m+1]}\times C_{\delta }^{[n+1]}{\longrightarrow }C_{\delta
}^{[m+n+2]}$ defines a homotopy between $j_{\delta }^{[m+n+2]}\bar
\Delta _{m+n+2}^D$ and the trivial map.

\end{proof}


\begin{corollary}\label{wsecat(composition)}
{\rm Consider $E\stackrel{p}{\rightarrow
}B\stackrel{j}{\rightarrow }C_p$ a cofibre sequence. If
$f:X\rightarrow B$ is any map then
$$\mbox{wsecat}(jf)\leq \mbox{wsecat}(f)+1$$}
\end{corollary}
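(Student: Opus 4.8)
The plan is to realize $jf$ as the map $\delta$ induced on homotopy pushouts by a suitable morphism of diagrams, and then to apply the previous theorem (the one bounding $\wsecat(\delta)$ in terms of $\wsecat(\beta)$ and $\wsecat(\gamma)$). Since $C_p$ is the homotopy cofibre of $p$, it is the homotopy pushout of the row $\ast\leftarrow E\xrightarrow{p}B$. On the other hand, $X$ is (weakly) the homotopy pushout of the row $\ast\xleftarrow{\mathrm{id}}\ast\to X$ whose left leg is the identity and whose right leg is the basepoint inclusion; here the fact that the left leg is a homotopy equivalence forces the pushout to be $X$ via the canonical map from the right-hand term. I would therefore form the homotopy commutative diagram
$$\xymatrix{
\ast \ar[d]_{\mathrm{id}} & \ast \ar[l] \ar[d] \ar[r] & X \ar[d]^{f} \\
\ast & E \ar[l] \ar[r]_{p} & B }$$
whose left column is $\beta=\mathrm{id}_\ast$, whose middle column is the basepoint inclusion $\ast\to E$, and whose right column is $\gamma=f$.

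First I would check that the two squares commute up to based homotopy: the left one trivially, since three of its corners are the point; the right one because $p$ and $f$ are based maps, so both composites $\ast\to B$ agree with the basepoint inclusion. Applying the preceding theorem to this diagram then gives
$$\wsecat(\delta)\leq\wsecat(\beta)+\wsecat(\gamma)+1=\wsecat(\mathrm{id}_\ast)+\wsecat(f)+1.$$
It remains to identify the two ends of this chain. On the one hand, $\wsecat(\mathrm{id}_\ast)=0$: the identity admits a (homotopy) section, so $\secat(\mathrm{id}_\ast)=0$, and hence $\wsecat(\mathrm{id}_\ast)=0$ (equivalently, one sees this directly from the definition, as the homotopy cofibre of $\mathrm{id}_\ast$ is a point). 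On the other hand, $\wsecat(\gamma)=\wsecat(f)$ since $\gamma=f$.

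The only genuinely delicate point is the identification of the induced co-whisker map $\delta\colon X\to C_p$ with $jf$. Tracing the canonical map $X\to D'$ (which is a homotopy equivalence, as the left leg $g'=\mathrm{id}_\ast$ of the top row is one) through $\delta$ produces the composite $X\xrightarrow{\gamma=f}B\to C_p$, and the structural map $B\to C_p$ of the bottom pushout is precisely $j$; hence $\delta\simeq jf$ under the identification $D'\simeq X$. This is a routine verification on the universal property defining the induced map, and once it is in place the stated inequality $\wsecat(jf)\leq\wsecat(f)+1$ follows at once. Everything apart from this bookkeeping is formal, so I expect the main (and essentially sole) obstacle to be making the identification $\delta\simeq jf$ clean.
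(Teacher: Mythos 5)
Your proof is correct and takes essentially the same route as the paper: the authors apply the preceding theorem to exactly the diagram you wrote, with rows $\ast\leftarrow\ast\to X$ and $\ast\leftarrow E\stackrel{p}{\to} B$, and note that the induced map between the homotopy pushouts satisfies $\delta\simeq jf$. The additional points you spell out (that $\wsecat(\mathrm{id}_\ast)=0$ and the identification $\delta\simeq jf$ via the equivalence $D'\simeq X$) are precisely the details the paper leaves implicit.
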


\begin{proof}
Apply the above result to the following commutative diagram
$$
\xymatrix{ {*} \ar[d] & {*} \ar[l] \ar[d]
\ar[r] & {X} \ar[d]^f \\
{*} & {E} \ar[l] \ar[r]_p & {B} }
$$
Observe that in this case, the induced map between the homotopy
pushouts verifies $\delta \simeq jf.$
\end{proof}

\begin{lemma}\label{wsecat(trivialmap)}
{\rm Let $f:X\rightarrow Y$ be any map such that $f\simeq *.$ Then
$$\mbox{wsecat}(f)=\mbox{wcat}(Y).$$ }
\end{lemma}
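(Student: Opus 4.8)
The plan is to reduce the statement to the characterization in Proposition~\ref{characterization_wsecat} and to exploit the special form of the homotopy cofibre of a null-homotopic map. Write $C_f$ for the homotopy cofibre of $f$ and $j:Y\to C_f$ for the induced map. By Proposition~\ref{characterization_wsecat}, $\wsecat(f)\leq n$ if and only if the composite $Y\stackrel{\bar\Delta_{n+1}}{\longrightarrow}Y^{[n+1]}\stackrel{j^{[n+1]}}{\longrightarrow}C_f^{[n+1]}$ is homotopically trivial, whereas by definition $\wcat(Y)\leq n$ if and only if $\bar\Delta_{n+1}:Y\to Y^{[n+1]}$ is homotopically trivial. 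Thus it suffices to show that, for every $n$, the map $j^{[n+1]}\bar\Delta_{n+1}$ is trivial precisely when $\bar\Delta_{n+1}$ is.

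First I would identify the homotopy cofibre. Since the homotopy cofibre is a homotopy invariant of the map and $f\simeq *$, I may replace $f$ by the constant map $c:X\to Y$. The mapping cone of $c$ collapses the whole of $X\subset CX$ to the basepoint, so $C_f\simeq Y\vee\Sigma X$, and under this equivalence the induced map $j:Y\to C_f$ corresponds to the inclusion $i_Y:Y\hookrightarrow Y\vee\Sigma X$ of the first wedge summand. Homotopy invariance of the mapping cone construction guarantees that the equivalence $C_f\simeq Y\vee\Sigma X$ is compatible, up to homotopy, with the structure maps from $Y$.

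The key observation is then that $i_Y$ is a split injection: the projection $r:Y\vee\Sigma X\to Y$ that collapses $\Sigma X$ satisfies $r\,i_Y=\mathrm{id}_Y$. By functoriality of the smash product, $r^{[n+1]}j^{[n+1]}=(r\,j)^{[n+1]}=\mathrm{id}_{Y^{[n+1]}}$. Consequently, if $j^{[n+1]}\bar\Delta_{n+1}\simeq *$, then applying $r^{[n+1]}$ gives $\bar\Delta_{n+1}=r^{[n+1]}j^{[n+1]}\bar\Delta_{n+1}\simeq *$; conversely, if $\bar\Delta_{n+1}\simeq *$, then $j^{[n+1]}\bar\Delta_{n+1}\simeq *$ at once. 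This equivalence of conditions, holding for all $n$, yields $\wsecat(f)=\wcat(Y)$.

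The only delicate point is the second step: one must check not merely that $C_f\simeq Y\vee\Sigma X$ but that this homotopy equivalence carries $j$ to the wedge inclusion $i_Y$, so that the retraction $r$ transports back to a retraction of $j$. This is where I would spend the most care, invoking the standard fact that a homotopy $f\simeq c$ induces an equivalence of mapping cones commuting up to homotopy with the inclusions of $Y$.
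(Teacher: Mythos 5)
Your proof is correct and follows essentially the same route as the paper: both arguments hinge on Proposition~\ref{characterization_wsecat} together with the observation that $j:Y\to C_f$ admits a (homotopy) retraction when $f\simeq *$, whose $(n+1)$-fold smash power then retracts $j^{[n+1]}$ and forces $\bar\Delta^Y_{n+1}\simeq *$ whenever $j^{[n+1]}\bar\Delta^Y_{n+1}\simeq *$. The only difference is cosmetic: the paper produces the retraction directly from the universal property of the pushout defining $C_f$ (using the factorization of $f$ through the cone $CX$ given by a null-homotopy), which sidesteps the compatibility-under-$Y$ issue that you rightly flag as the delicate point in your identification $C_f\simeq Y\vee\Sigma X$.
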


\begin{proof}
By Corollary \ref{wsecat(p)<=wcat(B)}, it suffices to prove that
$\wsecat(f)\geq \wcat(Y)$. Since $f\simeq *$ there is a
factorization $f=Fk,$ where $F:CX\rightarrow Y$ and
$k:X\rightarrow CX$ is the natural inclusion on the cone of $X$.
Taking the following pushout,
$$\xymatrix{
  {X} \ar[d]_{k} \ar[r]^{f}
                & {Y} \ar[d]^{\overline{k}} \ar@/^/[ddr]^{id}  \\
  {CX} \ar[r]_{\overline{f}} \ar@/_/[drr]_{F}
                & {C_f} \ar@{.>}[dr]|-{\varphi }            \\
                &               & {Y}             }$$
we get a map $\varphi :C_f\rightarrow Y$ which is a retraction of
the inclusion $\overline{k}:Y\rightarrow C_f$. We thus have
$\varphi ^{[n+1]}(\overline{k})^{[n+1]}=id$. Therefore, if the map
$\xymatrix{ Y\ar[r]^{\bar \Delta^Y_{n+1}}&
Y^{[n+1]}\ar[r]^{\overline{k}^{[n+1]}}&C_f^{[n+1]}}$ is
homotopically trivial, then so is the reduced diagonal $\bar
\Delta^Y_{n+1}:Y\to Y^{[n+1]}$. By Proposition
\ref{characterization_wsecat}, we obtain
 the expected inequality.
\end{proof}

\begin{corollary}
{\rm Consider $E\stackrel{p}{\rightarrow
}B\stackrel{j}{\rightarrow }C_p$ a cofibre sequence. Then
$$\mbox{wcat}(C_p)\leq \mbox{wsecat}(p)+1$$ }
\end{corollary}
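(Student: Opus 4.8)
The plan is to combine two results already established in this subsection, applied to the single map $jp:E\to C_p$. The key observation is that this composite plays a double role: on the one hand it is null-homotopic because $E\stackrel{p}{\rightarrow}B\stackrel{j}{\rightarrow}C_p$ is a cofibre sequence, so that $jp\simeq *$ by the very definition of the homotopy cofibre; on the other hand it is literally the composition of $j$ with the map $p:E\to B$, which puts it in the scope of Corollary \ref{wsecat(composition)}.

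First I would invoke Lemma \ref{wsecat(trivialmap)} for the null-homotopic map $f=jp:E\to C_p$. Since $jp\simeq *$, the lemma gives the exact value
$$\wsecat(jp)=\wcat(C_p).$$
This converts the weak category of $C_p$, which is the quantity we wish to bound, into the weak sectional category of a concrete map.

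Next I would apply Corollary \ref{wsecat(composition)} to the cofibre sequence $E\stackrel{p}{\rightarrow}B\stackrel{j}{\rightarrow}C_p$, taking the auxiliary map to be $p:E\to B$ itself (that is, $X=E$ and $f=p$). The corollary then yields
$$\wsecat(jp)\leq \wsecat(p)+1.$$
Combining the two displayed relations immediately gives $\wcat(C_p)=\wsecat(jp)\leq \wsecat(p)+1$, which is the desired inequality.

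There is no substantial obstacle here: the entire argument rests on recognizing that $jp$ is simultaneously trivial (to feed into Lemma \ref{wsecat(trivialmap)}) and a factored composite (to feed into Corollary \ref{wsecat(composition)}). The only point requiring a word of justification is the nullhomotopy $jp\simeq *$, which is simply the defining property of the cofibre sequence and needs no further work.
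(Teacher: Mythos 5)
Your proof is correct and is essentially identical to the paper's own argument: the paper likewise applies Corollary \ref{wsecat(composition)} with $f=p$ to get $\wsecat(jp)\leq\wsecat(p)+1$ and Lemma \ref{wsecat(trivialmap)} to the null-homotopic composite $jp$ to get $\wsecat(jp)=\wcat(C_p)$. Your write-up merely spells out the details that the paper's one-line proof leaves implicit.
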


\begin{proof}
Apply Corollary \ref{wsecat(composition)} and Lemma
\ref{wsecat(trivialmap)} when $f=p.$.
\end{proof}

\noindent Observe that this inequality is an improvement of the
classical inequality $\wcat(C_p)\leq \wcat(B)+1$.

\subsection{Some remarkable inequalities}

In the two previous sections we have seen that, for a map $p: E\to
B$ with homotopy cofibre $C_p$, we have $\wsecat(p)\leq \wcat(B)$
and $\wsecat(p)\geq \wcat(C_p)-1$. Actually these two inequalities
fit in a set of inequalities we collect in the following
statement:
\begin{theorem}\label{inequalities}
If $p: E\to B$ is a map and $C_p$ its homotopy cofibre, then
\begin{enumerate}
\item[(a)] $\wsecat(p)\leq \wcat(B)$
\item[(b)] $\wsecat(p)\leq \wcat(C_p)$
\item[(c)] $\wsecat(p)\geq \wcat(C_p)-1$
\item[(d)] $\wsecat(p)\geq \nil \ker p^*$ (for any commutative ring $\pi$)
\item[(e)] If the map $p: E\to B$ admits a homotopy retraction, then
$$\wsecat(p)=\wcat(C_p)\hspace{5pt}\mbox{and}\hspace{5pt}\nil \ker p^*=\cuplength(C_p).$$
\end{enumerate}

\end{theorem}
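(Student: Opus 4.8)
The plan is to dispatch (a) and (c) by quoting results already in hand, to obtain (b) by a one-line naturality argument, to prove (d) cohomologically, and to extract (e) from a single structural observation about the Puppe sequence of $p$. Indeed, (a) is precisely Corollary~\ref{wsecat(p)<=wcat(B)}, and (c), rewritten as $\wcat(C_p)\le\wsecat(p)+1$, is exactly the corollary obtained above from Corollary~\ref{wsecat(composition)} and Lemma~\ref{wsecat(trivialmap)}; so nothing new is needed for these. For (b) I would combine Proposition~\ref{characterization_wsecat} with the naturality of the reduced diagonal with respect to $j:B\to C_p$, which gives $\bar\Delta^{C_p}_{n+1}\circ j=j^{[n+1]}\circ\bar\Delta^B_{n+1}$. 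If $\wcat(C_p)\le n$ then $\bar\Delta^{C_p}_{n+1}\simeq *$, whence $j^{[n+1]}\bar\Delta^B_{n+1}\simeq *$, and Proposition~\ref{characterization_wsecat} yields $\wsecat(p)\le n$; taking $n=\wcat(C_p)$ gives (b).

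For (d), assume $\wsecat(p)=n<\infty$ and aim to show every $(n+1)$-fold product in $\ker p^*$ vanishes. The cofibre sequence $E\xrightarrow{p}B\xrightarrow{j}C_p$ gives the exact sequence $\tilde H^*(C_p)\xrightarrow{j^*}\tilde H^*(B)\xrightarrow{p^*}\tilde H^*(E)$, so $\ker p^*=\operatorname{im}j^*$. Writing $x_i=j^*(y_i)$ with $y_i\in\tilde H^*(C_p;\pi)$, I would form the external product $y_0\times\cdots\times y_n\in\tilde H^*(C_p^{[n+1]};\pi)$ and use the standard relation $(\bar\Delta^{C_p}_{n+1})^*(y_0\times\cdots\times y_n)=y_0\cdots y_n$. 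By naturality, $j^*\circ(\bar\Delta^{C_p}_{n+1})^*=(\bar\Delta^B_{n+1})^*\circ(j^{[n+1]})^*=(j^{[n+1]}\bar\Delta^B_{n+1})^*$, and since $j^{[n+1]}\bar\Delta^B_{n+1}\simeq *$ by Proposition~\ref{characterization_wsecat} this operator is zero; evaluating on $y_0\times\cdots\times y_n$ gives $x_0\cdots x_n=j^*(y_0\cdots y_n)=0$. As the $x_i\in\ker p^*$ were arbitrary, $\nil\ker p^*\le n$.

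The crux is (e), and the key observation is that a homotopy retraction $r$ of $p$ (that is, $rp\simeq\mathrm{id}_E$) forces the connecting map $q:C_p\to\Sigma E$ of the Puppe sequence $E\xrightarrow{p}B\xrightarrow{j}C_p\xrightarrow{q}\Sigma E\xrightarrow{\Sigma p}\Sigma B$ to be null-homotopic: here $\Sigma p$ is again a coretraction, with left homotopy inverse $\Sigma r$, and since $(\Sigma p)q\simeq *$ (consecutive Puppe maps) left-cancellation gives $q\simeq(\Sigma r)(\Sigma p)q\simeq *$. Granting this, for the first equality it suffices by (b) to prove $\wcat(C_p)\le\wsecat(p)$: if $\wsecat(p)\le n$ then $\bar\Delta^{C_p}_{n+1}j\simeq *$ by Proposition~\ref{characterization_wsecat}, so by coexactness of $B\xrightarrow{j}C_p\xrightarrow{q}\Sigma E$ the map $\bar\Delta^{C_p}_{n+1}$ factors up to homotopy through $q$, and $q\simeq *$ forces $\bar\Delta^{C_p}_{n+1}\simeq *$, i.e. $\wcat(C_p)\le n$. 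For the second equality, $q\simeq *$ makes $q^*=0$, so the exact sequence above shows $j^*$ injective; thus $j^*$ is a ring isomorphism onto the ideal $\operatorname{im}j^*=\ker p^*$, and since nilpotency is a ring-isomorphism invariant we conclude $\nil\ker p^*=\cuplength(C_p)$.

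The step I expect to be most delicate is the vanishing of $q$ underlying (e): it is precisely what upgrades the $+1$ in (c) to an equality and simultaneously produces the cohomological statement, so the real content lies in verifying carefully that $\Sigma p$ is a coretraction and that the consecutive-composite relation together with left-cancellation in the homotopy category genuinely apply. A secondary point needing care in (d) is the definedness and naturality of the external product $y_0\times\cdots\times y_n$ over an arbitrary commutative ring $\pi$ together with the reduced-diagonal/cup-product formula, which I would invoke as standard.
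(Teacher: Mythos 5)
Your proposal is correct and follows essentially the same route as the paper: (a) and (c) quoted from the earlier corollaries, (b) by naturality of the reduced diagonal plus Proposition~\ref{characterization_wsecat}, (d) by lifting classes of $\ker p^*$ to $\tilde H^*(C_p)$ and observing that their product factors through the null map $j^{[n+1]}\bar\Delta^B_{n+1}$, and (e) by showing $\delta\simeq \Sigma r\,\Sigma p\,\delta\simeq *$ via the Puppe sequence and then factoring $\bar\Delta^{C_p}_{n+1}$ through $\delta$. The only cosmetic deviations are that the paper realizes the cup products in (d) via Eilenberg--MacLane representatives rather than the reduced external product, and in the last part of (e) it deduces the ring identification from surjectivity of $p^*$ rather than from $\delta^*=0$; both variants are equivalent and sound.
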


\noindent Observe that the inequalities (a), (b), (c) and (d) can
be summarized in the following way:
$$\max(\wcat(C_p)-1,\nil \ker p^*) \leq \wsecat(p)\leq \min(\wcat(C_p),\wcat(B))$$
After the proof of the theorem, we will see, through examples, that all the
inequalities (a), (b), (c) and (d) can be strict. In particular, this shows
that $\wsecat(p)$ is a better lower bound for $\secat(p)$ than the classical
cohomological lower bound $\nil \ker p^*$. \\

\begin{proof} We just have to prove (b), (d) and (e). We use the characterization of
$\wsecat$ established in Proposition \ref{characterization_wsecat}. We first prove (b).
By naturality of the diagonal the following diagram is commutative:
$$\xymatrix{ B\ar[rr]^{\bar \Delta_{n+1}^B}\ar[d]_j & & B^{[n+1]}\ar[d]^{j^{[n+1]}}\\
C_p\ar[rr]^{\bar \Delta_{n+1}^{C_p}} & & C_p^{[n+1]}}$$
Therefore if $\wcat(C_p)\leq n,$ then we have $\wsecat(p)\leq n$.\\
We now prove (d). Assume that $\mbox{wsecat}(p)\leq n$ and take
$x_1,...,x_{n+1}\in \ker(p^*)\subset \tilde{H}^*(B)$  where
$\tilde H^*$ stands for the reduced cohomology with coefficients
in a commutative ring $\pi$. From the cofibre sequence
$E\stackrel{p}{\longrightarrow }B\stackrel{j}{\longrightarrow}C_p$
we have an exact sequence in reduced cohomology
$$\tilde{H}^*(C_p)\stackrel{j^*}{\longrightarrow
}\tilde{H}^*(B)\stackrel{p^*}{\longrightarrow}\tilde{H}^*(E)$$
\noindent so we can consider $y_1,...,y_{n+1}\in \tilde{H}^*(C_p)$
such that $j^*(y_i)=x_i,$ $i=1,...,n+1.$ Now take, for each $i,$ a
representative $f_i:C_p\rightarrow K_i$ of $y_i$ (where $K_i=K(\pi
,m_i)$ is the Eilenberg-MacLane space of type $(\pi ,m_i),$ for
certain $m_i$) and the following commutative diagram:
$$
\xymatrix{ {B} \ar[rrdd]_{\bar \Delta _{n+1}^{B}} \ar[r]^{j} &
{C_p}\ar[rrdd]^{\bar \Delta _{n+1}^{C_p}}
 \ar[rr]^{\Delta _{n+1}^{C_p}} & & {C_p^{n+1}}
\ar[dd]^{q_{n+1}^{C_p}} \ar[rr]^(.4){f_1\times ...\times f_{n+1}}
& &
{K_1\times ...\times K_{n+1}} \ar[dd]^q \\
&&&&&\\
 & & B^{[n+1]} \ar[r]_{j^{[n+1]}}&{C_p^{[n+1]}}
\ar[rr]^(.4){f_1\wedge ...\wedge f_{n+1}} & & {K_1\wedge ...\wedge
K_{n+1}} }
$$
Then $x_1\cup ...\cup x_{n+1}=j^*(y_1\cup ...\cup y_{n+1})$ has as
a representative the composition of $q(f_1\times ...\times
f_{n+1})\Delta _{n+1}^{C_p}j$ with certain map $K_1\wedge
...\wedge K_{n+1}\rightarrow K(\pi, m_1+...+m_{n+1})$ (see, for
instance, \cite{A-G-P}). But the first map is already homotopy
trivial since $j^{[n+1]}\bar \Delta _{n+1}^B\simeq *.$ This
completes the proof of (d). We also observe that if $y_1\cup
...\cup y_{n+1}=0$, then $x_1\cup ...\cup x_{n+1}=j^*(y_1\cup
...\cup y_{n+1})=0$.
In other words, $\nil \ker p^*\leq\cuplength(C_p)$.\\

Finally we prove (e). Suppose that $\wsecat(p)\leq n$ and let $H$ be a
homotopy between $j^{[n+1]}\bar \Delta_{n+1}^B=\bar \Delta_{n+1}^{C_p}j$
and the trivial map. This homotopy together with the choice of a homotopy
equivalence between $C_j$ and $\Sigma E$ determines a map
$\phi_H:\Sigma E \to C_p^{[n+1]}$ such that the lower triangle in the
following diagram is homotopy commutative: \\
$$\xymatrix{ B \ar[rr]^{\bar \Delta_{n+1}^B} \ar[d]_j & & B^{[n+1]}\ar[d]^{j^{[n+1]}}\\
C_p\ar[rr]^{\bar \Delta_{n+1}^{C_p}}\ar[d]_{\delta} & & C_p^{[n+1]}\\
\Sigma E \ar[rru]_{\phi_H} & & }$$ Here $\delta$ is the
identification map $C_p\to C_p/B=\Sigma E$. It follows directly
from this diagram that if $\delta\simeq *$ or $\phi_H\simeq *,$
then $\wcat(C_p)\leq n$ and we can conclude that
$\wsecat(p)=\wcat(C_p)$. In particular, if $p$ has a homotopy
retraction $r$, then $\Sigma r$ is a homotopy retraction of
$\Sigma p$ and by considering the Barratt-Puppe sequence
$$\xymatrix{
E\ar[r]^p&B\ar[r]^j & C_p \ar[r]^{\delta}& \Sigma E \ar[r]^{\Sigma
p} & \Sigma B \ar[r]&... }$$ we see that $\delta\simeq \Sigma r
\Sigma p\, \delta \simeq \ast$. This proves the first part of the statement.
For the second part, we observe that if $p$ has a homotopy retraction,
then $p^*$ is surjective and $\ker p^*=\tilde H^*(C_ p)$.
Therefore $\nil \ker p^*=\nil \tilde H^*(C_ p)=\cuplength(C_p)$.\\
\end{proof}

The following examples show that the inequalities (a), (b), (c) and (d),
in this order, can be strict.\\

\noindent \textbf{Examples}

\begin{enumerate}
\item If $B$ is a space with $\wcat(B)>0$ then the weak sectional category of the
projection $p:A\times B\to B$, where $A$ is any space, satisfies
$\wsecat(p)=0<\wcat(B)$.

\item Let $p$ be the Hopf map $S^3\to S^2$. We have $\wsecat(p)\leq \wcat(S^2)=1$
but $\wcat(C_p)=\wcat(\CP^2)=2$.

\item If $p$ admits a homotopy retraction, then $\wsecat(p)=\wcat(C_p)>\wcat(C_p)-1$.
A concrete example is given by taking for $p$ an inclusion $B\to B\vee A$.

\item Recall that the symplectic group Sp(2) admits a cellular decomposition of the form
$S^3\cup_{\alpha}e^7\cup_{\beta}e^{10}$. It has been proved by
Berstein and Hilton \cite{B-H} that
$\wcat(S^3\cup_{\alpha}e^7)=\cat(S^3\cup_{\alpha}e^7)=2$ and by
Schweitzer \cite{Schwe} that
$\wcat(\mbox{Sp(2)})=\cat(\mbox{Sp(2)})=3$. Take $p=\beta:S^9\to
S^3\cup_{\alpha}e^7$. Since $\wcat(S^3\cup_{\alpha}e^7)=2$, we
have $\wsecat(p)\leq 2$. On the other hand, since
$C_p=\mbox{Sp(2)}$, we have $\wsecat(p)\geq \wcat(C_p)-1=2$.
Therefore $\wsecat(p)=2$. But $\ker(p^*)=\tilde
H^*(S^3\cup_{\alpha}e^7)$
and $\nil(\ker p^*)=1$.\\

\end{enumerate}


Our last result for this section establishes the equality between
the weak sectional category and the sectional category under some
restrictions on the dimension and connectivity. This kind of
results is classical in the theory of Lusternik-Schnirelmann
category (see, for instance, \cite{C-L-O-T} [Section 2.8]) and is
based on Blakers-Massey Theorem that, in this context, it is
appropriate to state in the following form:

\begin{lemma}[\textbf{Blakers-Massey Theorem}]
Let $A\stackrel{j}{\rightarrow }B\stackrel{q}{\rightarrow }C$ be a
cofibre sequence where all spaces are simply connected, $A$ is
$(a-1)$-connected and $C$ is $(c-1)$-connected. Let $F$ be the
homotopy fibre of $q,$ $\iota :F\rightarrow B$ the induced map and
$d:A\rightarrow F$ a lifting of $j$ (i.e. $id\simeq j$). Then $d$
is an $(a+c-2)$-equivalence.
\end{lemma}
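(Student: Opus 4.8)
The plan is to derive this statement from the classical homotopy excision theorem of Blakers and Massey, after repackaging the data as a single homotopy pushout square and identifying the homotopy fibre $F$ with a homotopy pullback. First I would reinterpret the cofibre sequence $A\stackrel{j}{\to}B\stackrel{q}{\to}C$ as the assertion that the square
$$\xymatrix{ A \ar[r]^j \ar[d] & B \ar[d]^q \\ {*} \ar[r] & C }$$
is a homotopy pushout, since the homotopy cofibre $C$ of $j$ is exactly the homotopy pushout of $j$ along the constant map $A\to *$. Dually, I would observe that the homotopy pullback of the cospan $B\stackrel{q}{\to}C\leftarrow *$ is precisely the homotopy fibre $F$ of $q$, and that the canonical comparison map $A\to F$ furnished by the weak universal property of the homotopy pullback is a lift of $j$ through $\iota:F\to B$; up to homotopy it therefore coincides with the given map $d$.

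Next I would compute the connectivities of the two legs of the span out of $A$. As $A$ is $(a-1)$-connected, its homotopy fibre over a point is $A$ itself, so the map $A\to *$ is $a$-connected. For $j:A\to B$ I would invoke the isomorphism $H_i(B,A)\cong \tilde H_i(C)$ coming from the cofibre sequence, together with the hypothesis that $C$ is $(c-1)$-connected: this gives $H_i(B,A)=0$ for $i\leq c-1$, and since all spaces are simply connected the relative Hurewicz theorem upgrades this to $\pi_i(B,A)=0$ for $i\leq c-1$, i.e.\ $j$ is $(c-1)$-connected.

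Finally I would apply homotopy excision in the standard form: for a homotopy pushout whose two legs out of the initial vertex are $m$- and $n$-connected, the comparison map from that vertex to the homotopy pullback of the opposite cospan is $(m+n-1)$-connected. Taking $m=c-1$ (for $j$) and $n=a$ (for $A\to *$), the comparison map $A\to F$, which we have identified with $d$, is $((c-1)+a-1)=(a+c-2)$-connected, that is, an $(a+c-2)$-equivalence, as claimed.

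The step I expect to be the main obstacle is the connectivity bookkeeping. The several conventions for the ``$n$-connectivity'' of a space, of a map, and of a pair are easy to conflate, and one must pin them down so that the relative Hurewicz argument yields exactly $(c-1)$ for $j$ and excision delivers the sharp bound $a+c-2$ rather than an off-by-one variant. A secondary point worth making explicit is the verification that the excision comparison map really does agree up to homotopy with the chosen lift $d$; this is forced by the universal property of the pullback, but it is what legitimizes transporting the connectivity conclusion from the abstract comparison map to $d$.
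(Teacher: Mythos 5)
The paper never actually proves this lemma: it is quoted as a classical result (Blakers--Massey) and used as a black box, so your proposal has to stand on its own merits. Its core does stand, and it is the standard modern argument: the cofibre sequence is the homotopy pushout of $*\leftarrow A\stackrel{j}{\to}B$; the leg $A\to *$ is $a$-connected because $A$ is $(a-1)$-connected; the leg $j$ is $(c-1)$-connected by $H_i(B,A)\cong\tilde H_i(C)$ together with the relative Hurewicz theorem (legitimate since everything in sight is simply connected); and homotopy excision in the Goodwillie form (a homotopy pushout square with an $m$-connected and an $n$-connected leg is $(m+n-1)$-cartesian) then says that the comparison map from $A$ to the homotopy pullback $F$ of $B\stackrel{q}{\to}C\leftarrow *$ is $((c-1)+a-1)=(a+c-2)$-connected. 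The connectivity bookkeeping, which you flagged as the main risk, is exactly right.

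The genuine gap is the point you dismissed as secondary. It is not true that ``the universal property of the pullback'' forces the given lift $d$ to agree up to homotopy with the excision comparison map. A map into the homotopy pullback $F$ is determined, up to homotopy, by a map to $B$, a map to $*$, \emph{and} a choice of nullhomotopy of the composite into $C$; different nullhomotopies of $qj$ can produce non-homotopic lifts of $j$, so lifts are not unique and the connectivity conclusion does not automatically transfer to an arbitrary $d$. Indeed, read literally (``$d$ a lifting of $j$''), the lemma is false: take $A=S^2$ and $B=*$, so that $C=S^3$, $F=\Omega S^3$, $a=2$, $c=3$; every map $S^2\to\Omega S^3$ is then a lift of the constant map $j$, yet the constant map $S^2\to\Omega S^3$ is not a $3$-equivalence, since it induces the zero map on $\pi_2$. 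What is true --- and what your excision argument actually proves --- is the statement for the canonical lift, namely the one induced by $j$, the constant map $A\to *$, and the nullhomotopy of $qj$ supplied by the cofibre structure $C\simeq B\cup_j CA$ (the cone on $A$). That canonical lift is also the one used when the paper applies the lemma, so the correct repair is not to look for a uniqueness argument (there is none) but to state the lemma for this specific lift and note that your excision proof delivers precisely that.
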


\begin{theorem}\label{secat-wsecat}
{\rm Let $p:E\rightarrow B$ be any map, where $E$ and $B$ are
(q-1)-connected CW-complexes ($q\geq 2$). If dim(B)=N and either
one of the following conditions is satisfied
\begin{enumerate}
\item[(i)] $N\leq q(\wsecat(p)+2)-2$

\item[(ii)] $N\leq q(\secat(p)+1)-2,$
\end{enumerate}

\noindent then $\secat(p)=\wsecat(p).$ }
\end{theorem}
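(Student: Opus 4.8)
The plan is to establish the equivalence $\secat(p)=\wsecat(p)$ by comparing two liftings: the genuine lifting of the diagonal $\Delta_{n+1}:B\to B^{n+1}$ through the sectional fatwedge $\kappa_n:T^n(p)\to B^{n+1}$ (which detects $\secat(p)\le n$ by Corollary \ref{characterization-secat}) and the weaker, stable obstruction that detects $\wsecat(p)\le n$ (the nullity of $l_n\circ\Delta_{n+1}:B\to C_{\kappa_n}$). Since we always have $\wsecat(p)\le\secat(p)$, it suffices to prove that under either dimension/connectivity hypothesis the vanishing of the weak obstruction already forces the existence of the genuine lift. First I would set $n=\wsecat(p)$ (respectively I would work with $n=\secat(p)$ in case (ii)) and, as always, replace $p$ by its cofibration model $\hat p:E\rightarrowtail\widehat B$ so that $\kappa_n:T^n(p)\rightarrowtail B^{n+1}$ is an honest cofibration with cofibre $C_{\kappa_n}$; this is legitimate since both invariants are unchanged under such a factorization.

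The key structural input is the cofibre sequence $T^n(p)\stackrel{\kappa_n}{\to}B^{n+1}\stackrel{l_n}{\to}C_{\kappa_n}$, to which I would apply the Blakers-Massey Theorem in the form stated just above. The lifting problem for $\Delta_{n+1}$ through $\kappa_n$ is, up to homotopy, a lifting through the honest fibre inclusion $\iota:F\to B^{n+1}$, where $F$ is the homotopy fibre of $l_n$; and the weak condition $l_n\Delta_{n+1}\simeq *$ is exactly the statement that $\Delta_{n+1}$ lifts to $F$. Blakers-Massey provides a map $d:T^n(p)\to F$ (a lifting of $\kappa_n$) which is an $(a+c-2)$-equivalence, where $a-1$ is the connectivity of $T^n(p)$ and $c-1$ that of $C_{\kappa_n}$. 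The heart of the argument is then a standard obstruction-theoretic comparison: since $B$ has dimension $N$, if $N$ does not exceed the connectivity degree of $d$, then any map $B\to F$ lifts through $d$ up to homotopy, converting the lift into $F$ into the desired genuine lift into $T^n(p)$.

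The main obstacle—and the step requiring the most care—will be the bookkeeping of connectivities needed to make the Blakers-Massey estimate $a+c-2$ large enough relative to $N$. I would compute the connectivity of $T^n(p)$ and of $C_{\kappa_n}$ from the inductive join construction of $\kappa_n$ (Definition of the sectional fatwedge together with Theorem on the homotopy pullback relating $*^n_B E$ and $T^n(p)$), using that joins raise connectivity additively and that $E,B$ are $(q-1)$-connected. One expects the fibre $F$ of $l_n$ to be roughly $q(n+2)-2$-connected after the shift coming from the $(n+1)$-fold product and the cofibre structure, which is precisely the numerology appearing in conditions (i) and (ii). The delicate point is ensuring that the connectivity of the comparison map $d$ strictly dominates $N=\dim(B)$ so that the single lifting obstruction, living in a cohomology group $H^{*}(B;\pi_{*}(\text{fibre of }d))$ that vanishes above the connectivity range, is guaranteed to disappear; once this inequality is verified under each of (i) and (ii), the genuine lift exists and one concludes $\secat(p)\le n=\wsecat(p)$, which combined with the trivial inequality yields equality.
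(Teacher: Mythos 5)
Your plan coincides with the paper's own proof: the paper applies the Blakers--Massey Theorem to the cofibre sequence $T^n(p)\to B^{n+1}\to C_{\kappa_n}$ (using that $T^n(p)$ is $(q-1)$-connected and $C_{\kappa_n}$ is $(q(n+1)-1)$-connected, since $C_{\kappa_n}\simeq C_p^{[n+1]}$), concludes that the comparison map $\widetilde{\kappa}_n:T^n(p)\to \widetilde{F}_{n+1}$ into the homotopy fibre of $l_n$ is a $(q(n+2)-2)$-equivalence, and uses $\dim(B)=N$ together with the hypothesis to get surjectivity of $(\widetilde{\kappa}_n)_*:[B,T^n(p)]\to[B,\widetilde{F}_{n+1}]$, so the lift of $\Delta_{n+1}$ into $\widetilde{F}_{n+1}$ coming from $\wsecat(p)\leq n$ factors through $T^n(p)$ and yields $\secat(p)\leq n$, with case (ii) handled by the same argument at level $n-1$ to reach a contradiction --- exactly your outline. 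One small slip to fix when writing it up: it is the comparison map $d=\widetilde{\kappa}_n$ that is $(q(n+2)-2)$-connected, not the fibre $F$ of $l_n$ itself (which is only $(q-1)$-connected in general), though your operative use of the estimate via the $(a+c-2)$-equivalence is the correct one.
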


\begin{proof}
For the proof of (i) suppose that $\wsecat (p)=n$ and consider the
following diagram
$$\xymatrix{
{\widetilde{F}_{n+1}} \ar[r]^{\iota _n} & {B^{n+1}} \ar[r]^{l_n} &
{C_{\kappa _n}} \\
{T^n(p)} \ar[ur]_{\kappa _n} \ar@{.>}[u]^{\widetilde{\kappa }_n}
}$$ where $\widetilde{F}_{n+1}$ the homotopy fiber of $l_n$ and
$\widetilde{\kappa }_n$ is the map induced by the homotopy fiber
property. By the same property we can also consider a lift
$\widetilde{\Delta }_{n+1}^B:B\rightarrow \widetilde{F}_{n+1}$ of
the diagonal map $\Delta _{n+1}^B.$ Taking into account that
$T^n(p)$ is (q-1)-connected and $C_{\kappa _n}$ is
(q(n+1)-1)-connected then, by the Blakers-Massey Theorem,
$\widetilde{\kappa }_n$ is a (q(n+2)-2)-equivalence. So, by the
hypothesis on the dimension of $B,$ we have that
$$(\widetilde{\kappa }_n)_*:[B,T^n(p)]\rightarrow
[B,\widetilde{F}_{n+1}]$$ \noindent is a surjection and therefore
there exists a map $\widehat{\Delta }_{n+1}^B:B\rightarrow T^n(p)$
such that $\widetilde{\kappa }_n\widehat{\Delta }_{n+1}^B \simeq
\widetilde{\Delta }_{n+1}^B.$ Composing $\iota _n$ to both sides
we obtain a factorization $\kappa _n\widehat{\Delta }_{n+1}^B
\simeq \Delta _{n+1}^B;$ that is, $\secat (p)\leq n.$

For (ii) suppose that $\secat(p)=n$ and that $\wsecat(p)\leq n-1.$
By a similar argument to that above one can find a factorization
$\kappa _{n-1}\widehat{\Delta }_{n}^B \simeq \Delta _{n}^B,$ which
is a contradiction.
\end{proof}

\section{Weak topological complexity.}

Studying the motion planning problem by using topological
techniques, M. Farber introduced in \cite{Far} and \cite{Far2} the
topological complexity of any space $X,$ $\mbox{TC}(X),$ as the
sectional category of the evaluation fibration $\pi
_X:X^I\rightarrow X\times X,$ $\pi _X(\alpha )=(\alpha (0),\alpha
(1)).$ Broadly speaking, this invariant measures the discontinuity
of any motion planner in the space. As the topological complexity
of a space $X$ is the sectional category of the fibration $\pi _X$
we may consider our weak version of sectional category to obtain a
\textit{new} lower bound of this homotopy invariant. Namely, we
define the \textit{weak topological complexity} of $X$ as $$\wTC
(X):=\wsecat (\pi _X)$$ Observe that, since $\pi _X$ is the
mapping path fibration associated to the diagonal map $\Delta
_X:X\rightarrow X\times X,$ we have that $\wTC (X)=\wsecat (\Delta
_X).$ We also note that in many cases the $n$-fatwedge of $\Delta
_X$ has the nice description given in Section 2. Namely, for
locally equiconnected spaces (those spaces in which $\Delta _X$ is
a cofibration) $T^n(\Delta _X)$ has, up to homotopy equivalence,
the following explicit expression
$$T^n(\Delta _X)=\{(y_0,...,y_n)\in (X\times X)^{n+1}\, |\,y_i\in \Delta _X(X)
\hspace{3pt} \mbox{for some}\hspace{3pt}i\}$$ Under the same
condition on $X,$ the homotopy cofiber of $\Delta _X$, $C_{\Delta
_X}$, is
 homotopically equivalent to the quotient space
$$(X\times X)/\Delta _X(X).$$

\noindent Note that the class of locally equiconnected spaces is
not very restrictive. For instance, the CW-complexes and the
metrizable
topological manifolds fit on such class of spaces. \\

The next two results are just a specialization of Theorem
\ref{inequalities} and Theorem \ref{secat-wsecat}, respectively,
and therefore their proofs are omitted. Recall that, if $\tilde
H^*$ stands for the reduced cohomology with coefficients in a
field \textbf{K}, then M. Farber \cite{Far} proved that $\nil \ker
(\Delta _X)^*=\nil \ker \hspace{3pt}\cup ,$ where $\cup :\tilde
H^*(X)\otimes \tilde H^*(X)\rightarrow \tilde H^*(X)$ denotes the
usual cup-product.

\begin{theorem}
Let $X$ be any space. Then
\begin{enumerate}
\item[(a)] $\wTC(X)\leq \wcat(X\times X)$
\item[(b)] $\wTC(X)\geq \nil \ker \hspace{3pt}\cup$
\item[(c)] $\wTC(X)=\wcat(C_{\Delta _X})$ (and $\nil \ker \hspace{3pt}\cup
=\cuplength(C_{\Delta _X})$).
\end{enumerate}
\end{theorem}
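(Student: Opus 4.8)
The plan is to derive all three statements as direct specializations of Theorem \ref{inequalities} applied to the map $p=\Delta_X:X\to X\times X$. By definition $\wTC(X)=\wsecat(\pi_X)$, and since $\pi_X$ is the mapping path fibration of the diagonal, we have $\wTC(X)=\wsecat(\Delta_X)$; accordingly I take $E=X$, $B=X\times X$ and homotopy cofibre $C_p=C_{\Delta_X}$ throughout, so that every item of Theorem \ref{inequalities} reads off immediately in the present notation. For (a) I would simply quote inequality (a) of Theorem \ref{inequalities}, which gives $\wsecat(\Delta_X)\leq\wcat(X\times X)$ with no further work.

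For (b) I would apply inequality (d) of Theorem \ref{inequalities}, yielding $\wsecat(\Delta_X)\geq\nil\ker(\Delta_X)^*$, and then invoke the cohomological computation of Farber recalled just before the statement, namely $\nil\ker(\Delta_X)^*=\nil\ker\cup$ for coefficients in a field $\mathbf{K}$. Chaining the two gives exactly $\wTC(X)\geq\nil\ker\cup$.

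The key observation for (c) is that the diagonal always admits an \emph{honest} retraction: either projection $pr_i:X\times X\to X$ satisfies $pr_i\circ\Delta_X=id_X$, so in particular $\Delta_X$ has a homotopy retraction. Hence the hypothesis of part (e) of Theorem \ref{inequalities} holds, and that part delivers at once both equalities $\wsecat(\Delta_X)=\wcat(C_{\Delta_X})$ and $\nil\ker(\Delta_X)^*=\cuplength(C_{\Delta_X})$. Substituting Farber's identity $\nil\ker(\Delta_X)^*=\nil\ker\cup$ into the second equality produces the parenthetical claim $\nil\ker\cup=\cuplength(C_{\Delta_X})$, completing (c).

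I expect no genuine obstacle, since the entire content is already packaged in Theorem \ref{inequalities}; the proof is a translation of hypotheses. The only point requiring care is a coefficient convention: parts (b) and (c) rest on Farber's identification of $\ker(\Delta_X)^*$ with $\ker\cup$, which is formulated for field coefficients $\mathbf{K}$, so these statements should be read with field coefficients, whereas inequality (d) of Theorem \ref{inequalities} is valid over any commutative ring. This is why, in my sketch, I keep the invocation of Theorem \ref{inequalities}(d),(e) separate from the passage to the cup-product formulation, and it is the one place where a careless reader might mismatch coefficient systems.
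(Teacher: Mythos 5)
Your proposal is correct and matches the paper exactly: the paper omits the proof, stating that the result is a direct specialization of Theorem \ref{inequalities} to $p=\Delta_X$ (using $\wTC(X)=\wsecat(\Delta_X)$ and Farber's identification $\nil \ker (\Delta_X)^*=\nil \ker \cup$ over a field), and your key observation that the projection gives a retraction of $\Delta_X$, so that part (e) applies, is precisely the implicit step the paper relies on. Your remark about keeping field coefficients for the cup-product formulation separate from the arbitrary-ring validity of inequality (d) is a sound reading of the paper's conventions.
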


\begin{theorem}
{\rm Let $X$ be any (q-1)-connected CW-complex ($q\geq 2$). If
dim(X)=N and either one of the following conditions is satisfied
\begin{enumerate}
\item[(i)] $N\leq \frac{q(\wTC(X)+2)}{2}-1$

\item[(ii)] $N\leq \frac{q(\TC(X)+1)}{2}-1,$
\end{enumerate}

\noindent then $\TC(X)=\wTC(X).$ }
\end{theorem}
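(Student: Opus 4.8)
The plan is to deduce this statement directly from Theorem \ref{secat-wsecat}, applied to the map $p=\Delta_X\colon X\to X\times X$. Recall that $\TC(X)=\secat(\pi_X)$ and that $\pi_X$ is the mapping path fibration associated to $\Delta_X$; since both $\secat$ and $\wsecat$ are invariant under replacing a map by a homotopy-equivalent one (as observed earlier for the factorization $p=h\circ\hat p$), we have $\TC(X)=\secat(\Delta_X)$ and $\wTC(X)=\wsecat(\Delta_X)$. It therefore suffices to set $E=X$ and $B=X\times X$ in Theorem \ref{secat-wsecat} and translate the hypotheses and conclusion back into $\TC$ and $\wTC$.

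First I would check that the hypotheses of Theorem \ref{secat-wsecat} hold for this choice of $p$. The space $E=X$ is a $(q-1)$-connected CW-complex by assumption. For $B=X\times X$, we have $\pi_i(X\times X)\cong\pi_i(X)\times\pi_i(X)$, which vanishes for $i\leq q-1$; hence $X\times X$ is again a $(q-1)$-connected CW-complex, with the \emph{same} connectivity parameter $q$. Next I would compute the dimension: since $\dim(X)=N$ and the product of CW-complexes carries the product cell structure, $\dim(B)=\dim(X\times X)=2N$.

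With these identifications, condition (i) of Theorem \ref{secat-wsecat}, namely $\dim(B)\leq q(\wsecat(p)+2)-2$, becomes $2N\leq q(\wTC(X)+2)-2$, and condition (ii), namely $\dim(B)\leq q(\secat(p)+1)-2$, becomes $2N\leq q(\TC(X)+1)-2$. Dividing each inequality by $2$ yields precisely the two displayed bounds (i) and (ii) of the present theorem, and the conclusion $\secat(p)=\wsecat(p)$ of Theorem \ref{secat-wsecat} reads $\TC(X)=\wTC(X)$. The argument involves no genuine obstacle, being a pure translation; the only points requiring care are the two elementary facts that $X\times X$ inherits $(q-1)$-connectedness and acquires the doubled dimension $2N$, and it is exactly the doubled dimension that accounts for the factor $2$ in the denominators and the shift from $-2$ to $-1$ in the bounds.
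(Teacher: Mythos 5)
Your proposal is correct and is exactly the argument the paper intends: the paper omits the proof, stating that the result is ``just a specialization'' of Theorem \ref{secat-wsecat}, and your proof carries out precisely that specialization to $p=\Delta_X\colon X\to X\times X$ (using $\TC(X)=\secat(\Delta_X)$, $\wTC(X)=\wsecat(\Delta_X)$). The two points you flag --- that $X\times X$ is again $(q-1)$-connected and has dimension $2N$, which produces the halved bounds --- are indeed the only things to check, and you check them correctly.
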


Finally we give two concrete computations. The first one
(Proposition \ref{cofibre_Diag_Sphere} below) consists of the
explicit determination of the homotopy cofibre of the diagonal map
$\Delta_{S^n}: S^n\to S^n\times S^n$ of a sphere. Using the
results of \cite{B-H} together with the classical results on the
Hopf invariant of the Whitehead product $[\iota_n,\iota_n]$ (where
$\iota_n$ denotes the homotopy class of the identity of $S^n$) it
is then possible to deduce that
$\wTC(S^n)(=\wcat(C_{\Delta_{S^n}}))$ is $1$ if $n$ is odd and is
$2$ if $n$ is even. The previous theorem permits to recover the
result by Farber that the topological complexity of an odd
dimensional sphere is $1$ while that of an even dimensional sphere
is $2$.

\begin{proposition}\label{cofibre_Diag_Sphere}
The homotopy cofibre of $\Delta_{S^n}: S^n\to S^n\times S^n$ is
homotopy  equivalent to $S^n\cup_{[\iota_n,\iota_n]}e^{2n}$.
\end{proposition}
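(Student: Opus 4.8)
The plan is to identify $C_{\Delta_{S^n}}$ with the quotient $(S^n\times S^n)/\Delta_{S^n}(S^n)$ and then to pin down the attaching map of its top cell through a homotopy-theoretic analysis of the standard cell structure of $S^n\times S^n$. First I would note that, since $S^n$ is a CW-complex (hence locally equiconnected), $\Delta_{S^n}$ is, up to the usual cofibration–equivalence factorization, a cofibration, so that $C_{\Delta_{S^n}}\simeq (S^n\times S^n)/\Delta_{S^n}(S^n)$ as recalled at the beginning of this section. I would fix the standard CW structure $S^n\times S^n=(S^n\vee S^n)\cup_{[i_1,i_2]}e^{2n}$, where $i_1,i_2\in\pi_n(S^n\vee S^n)$ denote the inclusions of the two factors and $[i_1,i_2]\in\pi_{2n-1}(S^n\vee S^n)$ is their Whitehead product. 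Computing from the cofibre long exact sequence (using that $\Delta_*\colon H_n(S^n)\to H_n(S^n\times S^n)$ is the diagonal $1\mapsto(1,1)$) already yields $\tilde H_*(C_{\Delta_{S^n}})=\Z$ in degrees $n$ and $2n$ and $0$ otherwise, so the answer must be $S^n$ with a single $2n$-cell attached; the whole content is the determination of the attaching map.

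The key step is to replace $\Delta_{S^n}$ by a cellular model. Under $\pi_n(S^n\times S^n)\cong\pi_n(S^n)\oplus\pi_n(S^n)$ the class of $\Delta_{S^n}$ is $(\iota_n,\iota_n)$, so for $n\geq 2$ the diagonal is homotopic to $j\circ\sigma$, where $\sigma\colon S^n\to S^n\vee S^n$ represents $i_1+i_2$ and $j\colon S^n\vee S^n\hookrightarrow S^n\times S^n$ is the inclusion. Since both the $2n$-cell of $S^n\times S^n$ (via $[i_1,i_2]$) and the cone on $S^n$ (via $\sigma$) are attached to $S^n\vee S^n$, I would reorganise the two attachments by a standard homotopy-pushout manipulation to obtain $C_{\Delta_{S^n}}\simeq C_\sigma\cup_{\bar w}e^{2n}$, where $C_\sigma$ is the cofibre of $\sigma$ and $\bar w$ is the composite of $[i_1,i_2]$ with the inclusion $S^n\vee S^n\hookrightarrow C_\sigma$.

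It then remains to identify $C_\sigma$ and $\bar w$. A homology computation together with Hurewicz and Whitehead gives $C_\sigma\simeq S^n$, and the composite $r\colon S^n\vee S^n\hookrightarrow C_\sigma\simeq S^n$ is the map of degree $1$ on the first summand and degree $-1$ on the second: indeed $i_1+i_2$ becomes null in $C_\sigma$, which forces $r_*i_2=-r_*i_1=-\iota_n$, and a map $S^n\vee S^n\to S^n$ is determined up to homotopy by its restriction to each summand. Using the naturality of the Whitehead product under $r$ and its bilinearity, I would compute $\bar w=r_*[i_1,i_2]=[\iota_n,-\iota_n]=-[\iota_n,\iota_n]$. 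Finally, since attaching a cell along $-f$ yields a space homotopy equivalent to the one obtained by attaching along $f$ (precompose the attaching map with a reflection of $S^{2n-1}$, which extends over the disc), I would conclude $C_{\Delta_{S^n}}\simeq S^n\cup_{[\iota_n,\iota_n]}e^{2n}$.

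The main obstacle is the careful bookkeeping in the rearrangement of the two attachments and the sign conventions for the Whitehead product; these must be handled precisely so that the cellular model of $\Delta_{S^n}$ and the identification of $r$ are compatible. One should also treat $n=1$ separately, where the $\pi_n$-argument must be replaced by a direct inspection of the torus; there $[\iota_1,\iota_1]=0$ and both sides reduce to $S^1\vee S^2$, so the statement still holds.
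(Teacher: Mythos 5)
Your proposal is correct and takes essentially the same route as the paper: both replace $\Delta_{S^n}$ by the composite of a pinch-type map $S^n\to S^n\vee S^n$ representing $i_1+i_2$ with the inclusion $S^n\vee S^n\hookrightarrow S^n\times S^n$, rearrange the Whitehead-product cell attachment $[i_1,i_2]$ and the cone attachment over the common subspace $S^n\vee S^n$, identify the resulting collapse map $S^n\vee S^n\to C_\sigma\simeq S^n$ as $(\mathrm{id},-\mathrm{id})$, and conclude from $[\iota_n,-\iota_n]=-[\iota_n,\iota_n]$ together with the reflection argument on the attaching sphere. The only differences are in bookkeeping (the paper exhibits explicit composable pushout squares and asserts the key homotopy pushout directly, where you argue via homology, Hurewicz and Whitehead), so the two proofs coincide in substance.
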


\begin{proof}
Let denote by $\nu:S^n\to S^n\vee S^n$ the pinch map, by
$j:S^n\vee S^n\to S^n\times S^n$ the inclusion and by
$\tilde{\nabla}$ the map $\xymatrix{S^n\vee S^n
\ar[r]^-{(id,-id)}& S^n}$. It is sufficient to establish the
result for the homotopy cofibre of $j\nu$ since this composite is
homotopic to the diagonal map $\Delta_{S^n}$. It is not hard to
check that the following homotopy commutative diagram is a
homotopy pushout
$$\xymatrix{S^n\ar[r]^{\nu}\ar[d]&S^n\vee S^n \ar[d]^-{\tilde{\nabla}}\\
\ast\ar[r]& S^n}$$ We can thus consider the following diagram in
which $w$ is the attaching  map of the top cell of $S^n\times S^n$
and the two lower squares are pushouts and homotopy pushouts
$$\xymatrix{& S^n\ar[d]^{\nu}\ar[r] &\ast\ar[d]\\
S^{2n-1}\ar[r]^{w}\ar@{ >->}[d] &
S^n\vee S^n\ar[r]^{\tilde{\nabla}}\ar@{ >->}[d]_j& S^n\ar@{ >->}[d]\\
CS^{2n-1}\ar[r] & S^n\times S^n\ar[r]& Z}$$ By composition we see
that, on the one hand, $Z$ is homotopy equivalent to the homotopy
cofibre of $j\nu$ and, on the other hand, $Z$ is the homotopy
cofibre of $\tilde{\nabla}w$ whose homotopy class is the Whitehead
product $[\iota_n,-\iota_n]= -[\iota_n,\iota_n]$. We then have
$Z\simeq S^n\cup_{-[\iota_n,\iota_n]}e^{2n}\simeq
S^n\cup_{[\iota_n,\iota_n]}e^{2n},$ where the last equivalence is
induced by  $-id:S^{2n-1}\to S^{2n-1}$.
\end{proof}

Our second concrete computation has the objective to show, through
an example, that $\wTC$ is, in general, a better lower bound for
the topological complexity than $\nil \ker \hspace{3pt}\cup$.
Before giving this example, we first prove a useful general result
about weak category.

\begin{proposition}\label{growth-of-wcat}
Let $f:A\to B$ be a map between $(q-1)$-connected CW-complexes
($q\geq 1$). If $f$ is an $r$-equivalence, $\wcat(A)\geq k$ and
$\dim(A)\leq r+q(k-1)-1$ then $\wcat(B)\geq \wcat(A)$.
\end{proposition}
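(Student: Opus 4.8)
The plan is to exploit the naturality of the reduced diagonal together with a connectivity estimate for iterated smash products of maps. Set $m=\wcat(A)$; by hypothesis $m\geq k$, and by the very definition of weak category the reduced diagonal $\bar\Delta^A_m:A\to A^{[m]}$ is \emph{not} homotopically trivial, since $\wcat(A)\leq m-1$ would force $\bar\Delta^A_m\simeq\ast$. I want to show $\wcat(B)\geq m$, i.e.\ that $\bar\Delta^B_m:B\to B^{[m]}$ is non-trivial, and I argue by contradiction: assuming $\bar\Delta^B_m\simeq\ast$, I will derive $\bar\Delta^A_m\simeq\ast$.

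First I record the naturality square for $f:A\to B$, which gives $f^{[m]}\circ\bar\Delta^A_m=\bar\Delta^B_m\circ f$. Hence, if $\bar\Delta^B_m\simeq\ast$, then $f^{[m]}\circ\bar\Delta^A_m\simeq\ast$. The whole argument then reduces to showing that $f^{[m]}_*:[A,A^{[m]}]\to[A,B^{[m]}]$ is injective, for then $f^{[m]}\circ\bar\Delta^A_m\simeq\ast$ yields $\bar\Delta^A_m\simeq\ast$, the desired contradiction.

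The key step, and the crux of the argument, is the estimate: if $f$ is an $r$-equivalence between $(q-1)$-connected complexes, then the $m$-fold smash $f^{[m]}:A^{[m]}\to B^{[m]}$ is an $(r+(m-1)q)$-equivalence. I will prove this by induction on $m$, writing $f^{[m]}$ as the composite $A\wedge A^{[m-1]}\xrightarrow{f\wedge\mathrm{id}}B\wedge A^{[m-1]}\xrightarrow{\mathrm{id}\wedge f^{[m-1]}}B\wedge B^{[m-1]}$. For a factor of the form $g\wedge\mathrm{id}_C$ (or $\mathrm{id}_C\wedge g$) with $g$ an $s$-equivalence and $C$ being $(p-1)$-connected, the homotopy cofibre is $C_g\wedge C$, since smashing with $C$ preserves cofibre sequences; as $C_g$ is $s$-connected and $C$ is $(p-1)$-connected, $C_g\wedge C$ is $(s+p)$-connected, so $g\wedge\mathrm{id}_C$ is an $(s+p)$-equivalence. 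Applying this to both factors (with $C=A^{[m-1]}$ of connectivity $(m-1)q-1$ for the first, and $f^{[m-1]}$ of connectivity $r+(m-2)q$ smashed with the $(q-1)$-connected $B$ for the second) and composing gives the bound $r+(m-1)q$. The only delicate bookkeeping is the connectivity of the cofibres and the passage from homological to homotopical connectivity; here one uses that every smash product in sight is simply connected once $m\geq 2$ (as $mq\geq 2$), the base case $m=1$ being the hypothesis on $f$.

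Finally I put the pieces together. Since $m\geq k$ and $q\geq 1$, the dimension hypothesis gives $\dim(A)\leq r+q(k-1)-1\leq r+q(m-1)-1$, so $\dim(A)<r+(m-1)q$, which is exactly the connectivity of $f^{[m]}$. By obstruction theory $f^{[m]}_*:[A,A^{[m]}]\to[A,B^{[m]}]$ is then bijective, in particular injective, which completes the contradiction and hence the proof. I expect the smash-product connectivity lemma to be the main obstacle; everything else is formal manipulation of the naturality square together with a standard obstruction-theoretic count.
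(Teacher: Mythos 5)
Your proof is correct and follows essentially the same route as the paper's: the naturality square for the reduced diagonal, the fact that the smashed map $f^{[m]}$ is an $(r+q(m-1))$-equivalence, and the dimension hypothesis giving injectivity of $(f^{[m]})_*:[A,A^{[m]}]\to[A,B^{[m]}]$, hence non-triviality of $\bar\Delta^B_m$. The only differences are cosmetic: you work with $m=\wcat(A)$ rather than with $k$ (which in fact yields the literal conclusion $\wcat(B)\geq\wcat(A)$, while the paper's argument as written gives $\wcat(B)\geq k$), and you supply the inductive proof of the smash-product connectivity estimate that the paper merely asserts.
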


\begin{proof}
Since $\wcat(A)\geq k$ the $k$-reduced diagonal $\bar
\Delta_k^A:A\to A^{[k]}$ is not homotopically trivial. By the
naturality of this diagonal we have the following commutative
diagram:
$$\xymatrix{
A \ar[r]^{\bar \Delta_k^A} \ar[d]_f & A^{[k]}\ar[d]^{f^{[k]}}\\
B \ar[r]_{\bar \Delta_k^B} & B^{[k]} }$$ Since $f$ is an
$r$-equivalence and $A$ and $B$ are $(q-1)$-connected CW-complexes
we obtain that $f^{[k]}$ is an $r+q(k-1)$ equivalence. The
assumption on the dimension of $A$ permits thus to assert that
$$(f^{[k]})_*:[A,A^{[k]}]\to [A, B^{[k]}]
$$
is injective. Since $\bar \Delta_k^A$ is not homotopically trivial
we conclude that $\bar \Delta_k^B\circ f=f^{[k]}\circ\bar
\Delta_k^A$ and $\bar \Delta_k^B$ are not homotopically trivial.
Therefore $\wcat(B)\geq k$.
\end{proof}

\begin{proposition}
Let $X:=S^3\cup_{\alpha} e^7$ where $\alpha:S^6\to S^3$ is the
Blakers-Massey element (that is, $X$ is the $7$-skeleton of
$\mbox{Sp(2)}$). For this space, we have $\nil \ker
\hspace{3pt}\cup=2$ and $\wTC(X)\geq 3$.
\end{proposition}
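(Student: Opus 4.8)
The plan is to push everything onto the homotopy cofibre $C_{\Delta_X}$ and then compute. Since the projection $\mathrm{pr}_1:X\times X\to X$ is a strict retraction of $\Delta_X$, Theorem \ref{inequalities}(e) applies to $p=\Delta_X$ and gives simultaneously $\wTC(X)=\wsecat(\Delta_X)=\wcat(C_{\Delta_X})$ and $\nil\ker\cup=\nil\ker(\Delta_X)^*=\cuplength(C_{\Delta_X})$. Moreover the retraction makes $(\Delta_X)^*$ surjective, so the cofibre sequence splits in cohomology and $\tilde H^*(C_{\Delta_X})\cong\ker\cup$. It therefore suffices to prove $\cuplength(C_{\Delta_X})=2$ and $\wcat(C_{\Delta_X})\ge 3$.

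For the cup-length I would argue directly in $\tilde H^*(X)$: over the coefficient field there is a single generator $u$ in degree $3$ and a single generator $v$ in degree $7$, and all products of positive-degree classes vanish (nothing lives in degrees $6,10,14$). By Künneth the zero-divisors $\bar u=u\otimes 1-1\otimes u$ and $\bar v=v\otimes 1-1\otimes v$ generate $\ker\cup$, and a direct calculation gives $\bar u^2=\bar v^2=0$ while $\bar u\,\bar v=v\otimes u-u\otimes v\neq 0$ in degree $10$; hence $\nil\ker\cup\ge 2$. Since $\ker\cup$ is concentrated in degrees $3,6,7,10,14$, a short check shows that no sum of three of these degrees equals $10$ or $14$, so every triple product lands in a degree where $\ker\cup$ vanishes. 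Thus $\nil\ker\cup=\cuplength(C_{\Delta_X})=2$.

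The substantial point is $\wcat(C_{\Delta_X})\ge 3$, and here the cup-length ($=2$) is useless; in fact all primary operations are: because every positive square vanishes on $u$ and $v$, the Cartan formula forces all positive mod-$2$ Steenrod operations to vanish on $\tilde H^*(C_{\Delta_X})$, so the essentiality of the triple reduced diagonal is a genuinely secondary phenomenon. I would import that phenomenon from $\mathrm{Sp}(2)$, where $\wcat(\mathrm{Sp}(2))=3$ is already recorded (Schweitzer \cite{Schwe}); note that $\langle\bar u,\bar v,\bar u\bar v\rangle\subset\tilde H^*(C_{\Delta_X})$ is ring-isomorphic to $H^*(\mathrm{Sp}(2))$, which suggests a comparison map. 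Concretely, the second-axis inclusion $*\times X\hookrightarrow X\times X\to C_{\Delta_X}$ produces a map $\iota:X\to C_{\Delta_X}$ inducing an isomorphism on $H_3$ (it carries the fundamental class to $\pm\bar u$). Writing $\mathrm{Sp}(2)=X\cup_\beta e^{10}$, the only obstruction to extending $\iota$ to $f:\mathrm{Sp}(2)\to C_{\Delta_X}$ is the class $\iota\beta\in\pi_9(C_{\Delta_X})$; granting $\iota\beta\simeq *$, such an $f$ exists, and since $\mathrm{Sp}(2)$ and $C_{\Delta_X}$ are $2$-connected with vanishing homology in degrees $4,5$ and $f$ is an isomorphism on $H_3$, the map $f$ is automatically a $5$-equivalence. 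Proposition \ref{growth-of-wcat} with $q=3$ and $k=3$ then applies (the dimension hypothesis is the equality $\dim\mathrm{Sp}(2)=10=5+q(k-1)-1$) and transfers $\wcat(\mathrm{Sp}(2))=3$ to $\wcat(C_{\Delta_X})\ge 3$, which is the claim.

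The main obstacle is precisely the vanishing $\iota\beta\simeq *$, i.e. the existence of the comparison map $f$. I expect to establish it by identifying the attaching map of the degree-$10$ cell of $C_{\Delta_X}$ that carries $\bar u\bar v$ as the $\iota$-image of a Whitehead product matching $\beta$; this is exactly the point at which the Blakers-Massey element $\alpha$ and the secondary structure responsible for the jump from $\wcat(X)=2$ to $\wcat(\mathrm{Sp}(2))=3$ re-enter, and it is what makes $\wTC(X)$ strictly exceed the cohomological bound $\nil\ker\cup$.
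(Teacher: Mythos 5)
Your reduction and bookkeeping are correct: Theorem \ref{inequalities}(e) applied to $\Delta_X$ (which retracts via $pr_1$) does give $\wTC(X)=\wcat(C_{\Delta_X})$ and $\nil \ker \cup = \cuplength(C_{\Delta_X})$; your degree count in $\tilde H^*(X\times X)$ correctly yields $\nil \ker \cup = 2$; and the numerology for Proposition \ref{growth-of-wcat} is right: if a map $f:\mbox{Sp(2)}\to C_{\Delta_X}$ extending $\iota=\varphi i_2$ existed, it would indeed be a $5$-equivalence (both spaces are $2$-connected with no homology in degrees $4$ and $5$, and $\iota$ is an isomorphism on $H_3$), and with $q=3$, $r=5$, $k=3$, $\dim \mbox{Sp(2)}=10=r+q(k-1)-1$, Schweitzer's $\wcat(\mbox{Sp(2)})=3$ would transfer to $\wcat(C_{\Delta_X})\geq 3$.

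The gap is that the existence of $f$, i.e.\ the vanishing of the obstruction $\iota_*[\beta]\in \pi_9(C_{\Delta_X})$, is exactly the hard core of the statement, and you do not prove it --- you explicitly defer it. There is no cheap way to get it: $\pi_9(C_{\Delta_X})$ lies far outside the range controlled by Blakers--Massey for the cofibre sequence $X\stackrel{\Delta_X}{\to}X\times X\stackrel{\varphi}{\to}C_{\Delta_X}$ (the comparison map from $X$ to the homotopy fibre of $\varphi$ is only a $4$-equivalence), so $\ker \varphi_*$ on $\pi_9$ is by no means just the image of $\Delta_*$, and your identity $\iota_*[\beta]=-\varphi_*(i_1)_*[\beta]$ does not help. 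The natural checks are vacuous: composing with the map $g:C_{\Delta_X}\to \mbox{Sp(2)}$ induced by $(x,y)\mapsto xy^{-1}$ kills $\iota_*[\beta]$ (because $\beta$ dies in $\mbox{Sp(2)}$), but $g$ is only a $6$-equivalence, so $g_*$ carries no injectivity on $\pi_9$; and although $\beta$ is stably trivial (the top cell of the parallelizable manifold $\mbox{Sp(2)}$ splits off stably), so that the obstruction vanishes stably, the question is genuinely unstable. Note that the paper's own proof is organized precisely to avoid constructing any map between $\mbox{Sp(2)}$ and $C_{\Delta_X}$: it interposes the cofibre $C$ of $S^3\to X\times S^3$, uses Proposition \ref{growth-of-wcat} only along the $6$-equivalence $C\to C_{\Delta_X}$ (the direction in which that proposition can work), and then transfers the essentiality of the third reduced diagonal from $\mbox{Sp(2)}$ to $C$ \emph{against} the direction of the $5$-equivalence $\hat{\rho}:C\to \mbox{Sp(2)}$ by an ad hoc argument exploiting the compatible collapse maps onto $S^{10}$, the lift of $h$ through the $11$-equivalence $\hat{\rho}\wedge\hat{\rho}\wedge\hat{\rho}$, and the $8$-connectivity of $C\wedge C\wedge C$. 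Your sketched plan for the missing step --- identifying an attaching map of one of the two $10$-cells of $C_{\Delta_X}$ with the $\iota$-image of a Whitehead product ``matching'' $\beta$ --- is not carried out, and it is of the same order of difficulty as the theorem itself. As it stands, the proposal establishes $\nil \ker \cup=2$ but not $\wTC(X)\geq 3$.
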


\begin{proof}
Since $H^*(X)=H^*(S^3\vee S^7)$, it is easy to check that $\nil
\ker \hspace{3pt} \cup=2$.  In order to prove that
$\wTC(X)=\wcat(C_{\Delta_X})\geq 3$ we proceed in three steps. Let
$C$ be the homotopy cofibre of the composition
$$\xymatrix{(incl\times id)\Delta_{S^3}:S^3
\ar[r]^-{\Delta_{S^3}}& S^3\times S^3 \ar[r]^{incl\times
id}&X\times S^3}.$$ The first step aims to see that it suffices to
show that $\wcat(C)\geq 3$. The second step shows that $C$ fits in
a special diagram and the third step gives the proof that
$\wcat(C)\geq 3$.

\textit{Step 1. } Let $f:C\to C_{\Delta_X}$ be the map induced by
the horizontal maps of the following square when we take the
homotopy cofibres of the vertical maps.
$$\xymatrix{
S^3 \ar[r]^{incl} \ar[d]_{(incl\times id)\Delta_{S^3}} & X \ar[d]^{\Delta_X} \\
X\times S^3 \ar[d] \ar[r]_{id\times incl} & X\times X \ar[d]\\
C \ar@{.>}[r]_f& C_{\Delta_X} }$$ Taking into account that the map
$X\times X\to C_{\Delta_X}$ induces a surjective morphism in
homology (because $\Delta_X$ admits a retraction), it is easy to
see that $H_i(f)$ is an isomorphism for $i\leq 5$ and is
surjective for $i=6$. Since $C$ and $C_{\Delta_X}$ are
$2$-connected (and thus $1$-connected) we can conclude that $f$ is
a $6$-equivalence. Since $\dim(C)=10$, we apply Proposition
\ref{growth-of-wcat} with $q=3$
and $r=6$ to obtain that if $\wcat(C)\geq 3,$ then $\wcat(C_{\Delta_X})\geq 3$.\\

\textit{Step 2. }Consider now the map $\xymatrix{S^3\times S^3
\ar[r]^{\mu'}& S^3}$  given by $\mu'(x,y)=x\cdot y^{-1}$. Since
$\mu'\Delta_{S^3}$ is trivial, $\mu'$ factors through
$C_{\Delta_{S^3}}$ and we have the following commutative diagram
$$\xymatrix{
S^3\times S^3 \ar[r]^{\varphi} \ar[rd]_{\mu'} & C_{\Delta_{S^3}}\ar[d]^{\rho}\\
& S^3 }$$ \noindent Let $i_1:S^3\to S^3\times S^3$ be the
inclusion of the first factor. We will see that there exists a
commutative diagram of the form
$$\xymatrix{
 C_{\Delta_{S^3}}\cup_{\varphi i_1 \alpha} e^7\ar@{ >->}[r] \ar[d] & C \ar[d]\\
X \ar@{ >->}[r] & {\mbox{Sp(2)}} }$$ which is a pushout and in
which the map $X\to \mbox{Sp(2)}$ is the inclusion and the  map
$C\to \mbox{Sp(2)}$ is a $5$-equivalence. We first prove that the
pushout of the cofibration $\xymatrix{S^3\times S^3 \ar@{
>->}[r]^{incl\times id}& X\times S^3}$ and the map
$\xymatrix{S^3\times S^3 \ar[r]^{\mu'}& S^3}$ is the group
$\mbox{Sp(2)}$. Since $\mbox{Sp(2)}$ is the total space of the
$S^3$-principal bundle over $S^7$ classified by the adjoint of
$\alpha: S^6\to S^3=\Omega B S^3$, we have the following pushout
diagram
$$\xymatrix{S^6\times S^3 \ar[d]_{\chi}\ar@{ >->}[r]^{incl \times id}& CS^6\times S^3\ar[d]\\
S^3 \ar@{ >->}[r] & {\mbox{Sp(2)}} }$$ where $\chi$ is the
composition $\xymatrix{S^6\times S^3 \ar[r]^{\alpha \times id}&
S^3\times S^3 \ar[r]^{mult}&S^3}$ and the map $S^3\to
\mbox{Sp(2)}$ is the inclusion. This diagram admits the following
decomposition in which the two upper squares are pushouts:
$$\xymatrix{S^6\times S^3 \ar[d]_{\alpha \times id}\ar@{ >->}[r]^{incl \times id}&
CS^6\times S^3\ar[d]\\
S^3\times S^3 \ar[d]_{id \times inv}\ar@{ >->}[r]^{incl \times id} &
X\times S^3\ar[d]^{id \times inv} \\
S^3\times S^3 \ar[d]_{\mu'}\ar@{ >->}[r]^{incl \times id}& X\times S^3\ar[d] \\
S^3 \ar@{ >->}[r] & {\mbox{Sp(2)}}}$$ We then conclude that the
bottom square is also a pushout diagram. We now decompose  $\mu'$
in $\rho \varphi$ and consider the pushout of $\varphi$ and the
cofibration $incl\times id:S^3\times S^3\to X\times S^3$. This
pushout is exactly the homotopy cofibre of $(incl\times
id)\Delta_{S^3}$, that is $C$. In this way the bottom square of
the previous diagram admits the following decomposition into two
pushouts:
$$\xymatrix{
S^3\times S^3 \ar[d]_{\varphi}\ar@{ >->}[r]^{incl \times id}& X\times S^3\ar[d]_{\hat{\varphi}} \\
C_{\Delta_{S^3}} \ar[d]_{\rho} \ar@{ >->}[r] & C \ar[d]_{\hat{\rho}}\\
S^3 \ar@{ >->}[r] & {\mbox{Sp(2)}}}$$ Now we decompose the
cofibration $incl\times id:S^3\times S^3\to X\times S^3$ in the
two following cofibrations
$$\xymatrix{
S^3\times S^3 \ar@{ >->}[r]& S^3\times S^3 \bigcup_{S^3\vee S^3}
X\vee S^3\ar@{ >->}[r]&X\times S^3 }$$ and observe that the
indermediate space is exactly the homotopy cofibre of
$i_1\alpha:S^6\to S^3\times S^3$. By constructing the pushout of
the cofibration $S^3\times S^3\to S^3\times S^3 \bigcup_{S^3\vee
S^3} X\vee S^3$ with first $\varphi$ and secondly $\rho\varphi$ we
then obtain the homotopy cofibres of $\varphi i_1 \alpha$ and
$\rho\varphi i_1 \alpha=\alpha$ (note that
 $\rho\varphi i_1=\mu' i_1=id$) and the following commutative diagram
 in which each square is a pushout.

$$\xymatrix{S^3\times S^3 \ar[d]_{\varphi} \ar@{ >->}[r] &
S^3\times S^3 \bigcup_{S^3\vee S^3} X\vee S^3\ar@{ >->}[r]\ar[d]&X\times S^3\ar[d]\\
C_{\Delta_{S^3}} \ar[d]_{\rho} \ar@{ >->}[r] &
C_{\Delta_{S^3}}\cup_{\varphi i_1 \alpha} e^7 \ar[d]_{\tilde{\rho}}
\ar@{ >->}[r] &C \ar[d]_{\hat{\rho}}\\
S^3 \ar@{ >->}[r] & X \ar@{ >->}[r] & {\mbox{Sp(2)}}}$$ The right
bottom pushout is the expected one. Observe that the inclusion
$S^3\times S^3 \bigcup_{S^3\vee S^3} X\vee S^3\hookrightarrow
X\times S^3$ is a $9$-equivalence. Thus so is the map $X\to
\mbox{Sp(2)}$. In other words this map is the inclusion. On the
other hand the map $\hat{\rho}$ has the same connectivity as
$\rho$. In order to justify that $\rho$ is a $5$-equivalence we
just recall that $C_{\Delta_{S^3}}=S^3\cup_{[\iota_3,-\iota_3]}e^6
\simeq S^3\vee S^6$ because the Whitehead products are trivial in
$S^3$ and that $\rho$ satisfies
$\rho\varphi i_1=id$.\\

\textit{Step 3. } We finally prove that $\wcat(C)\geq 3$. The
homotopy cofibre  of the inclusion $X\to \mbox{Sp(2)}$ is
homotopically equivalent to $S^{10}$. Therefore, from the pushout
of the previous step, we have the following homotopy commutative
diagram.
$$\xymatrix{
 C \ar[d]_{\hat{\rho}} \ar[r]^{\partial} & S^{10} \ar@{=}[d]\\
{\mbox{Sp(2)}}\ar[r]_{\delta}& S^{10} }$$ On the other hand, by
naturality, the following diagram is commutative
$$\xymatrix{
 C \ar[d]_{\hat{\rho}} \ar[r]^-{\bar{\Delta}_3} & C\wedge C\wedge C
 \ar[d]^{\hat{\rho}\wedge \hat{\rho}\wedge\hat{\rho}}\\
{\mbox{Sp(2)}} \ar[r]^-{\bar{\Delta}_3}& {\mbox{Sp(2)}\wedge
\mbox{Sp(2)}\wedge \mbox{Sp(2)}} }$$ Since $\wcat(X)\leq 2$, the
$3$rd reduced diagonal of $Sp(2)$ factors through $\delta$ and we
have a homotopy commutative diagram
$$\xymatrix{
{\mbox{Sp(2)}} \ar[rd]_{\delta}\ar[rr]^-{\bar{\Delta}_3}& &
{\mbox{Sp(2)}\wedge \mbox{Sp(2)}\wedge \mbox{Sp(2)}}\\
& S^{10} \ar[ru]_h }$$ As $\wcat(\mbox{Sp(2)})>2$ the map $h$ is
not homotopically trivial (actually $h$ is known to be the
composite $S^{10}\stackrel{\Sigma^7\eta}{\to}S^9\hookrightarrow
\mbox{Sp(2)}\wedge \mbox{Sp(2)}\wedge \mbox{Sp(2)}$, see
\cite[Rem. 6.50]{C-L-O-T}). Now, taking into account that
${\hat{\rho}}$ is a $5$-equivalence and that $C$ and
$\mbox{Sp(2)}$ are $2$-connected we can check that
$\hat{\rho}\wedge \hat{\rho}\wedge\hat{\rho}$ is a
$11$-equivalence. Therefore $h$ lifts to a map $m:S^{10} \to
C\wedge C\wedge C$ such that $(\hat{\rho}\wedge
\hat{\rho}\wedge\hat{\rho})m\simeq h$. We then have the following
diagram in which each face, except the top triangle, is homotopy
commutative.
$$\xymatrix{
 C \ar[rd]_{\partial}\ar[dd]_{\hat{\rho}} \ar[rr]^-{\bar{\Delta}_3}
 & & C\wedge C\wedge C \ar[dd]^{\hat{\rho}\wedge \hat{\rho}\wedge\hat{\rho}}\\
& S^{10} \ar@{.>}[ru]_m \ar@{=}[dd]\\
{\mbox{Sp(2)}} \ar[rd]_{\delta} \ar'[r][rr]^(.2){\bar{\Delta}_3} & &
{\mbox{Sp(2)}\wedge \mbox{Sp(2)}\wedge \mbox{Sp(2)}}\\
& S^{10} \ar[ru]_h }$$ Since $\dim(C)=10$, we deduce from the fact
that $\hat{\rho}\wedge \hat{\rho}\wedge\hat{\rho}$ is a
$11$-equivalence that the top triangle is actually homotopy
commutative. Suppose now that $\wcat(C)\leq 2$. This implies that
the composite $m\partial$ is homotopically trivial and, therefore,
that the map $m$ factors through the homotopy cofibre of
$\partial$. Since this homotopy cofibre is homotopy equivalent to
$\Sigma (C_{\Delta_{S^3}}\cup_{\varphi i_1 \alpha} e^7)$ we then
have a homotopy commutative diagram:
$$\xymatrix{
S^{10} \ar[rr]^m \ar[rd] && C\wedge C \wedge C\\
& \Sigma (C_{\Delta_{S^3}}\cup_{\varphi i_1 \alpha} e^7)\ar[ru]&
}$$ But $C\wedge C \wedge C$ is $8$-connected and $\dim(\Sigma
(C_{\Delta_{S^3}}\cup_{\varphi i_1 \alpha} e^7))=8$. Therefore $m$
should be homotopically trivial which contradicts the fact that
$h$ is not homotopically trivial. We then conclude that
$\wcat(C)\geq 3$.\end{proof}

\end{document}